\documentclass[10pt, a4paper, leqno]{amsart}
%%%%%%%%%%%% METHOD FOR HOUR AND MINUTE %%%%%%%%%%%%%
%=====================================================================
\newcounter{minutes}\setcounter{minutes}{\time}
\divide\time by 60
\newcounter{hours}\setcounter{hours}{\time}
\multiply\time by 60 \addtocounter{minutes}{-\time}
%=====================================================================

\usepackage{amssymb}
\usepackage{hyperref}
\usepackage{graphicx}
\date{}

\usepackage{amsfonts}
\usepackage{amsthm}
\usepackage{amsmath}
\usepackage{graphicx}
\usepackage{amscd}
\usepackage{color}
\usepackage{epstopdf}

\title[{Inequalities for generalized inverse trigonometric functions}]
{Functional inequalities for generalized inverse trigonometric and hyperbolic functions}

\author[\'A. Baricz]{\'Arp\'ad Baricz$^{\dagger}$}
\address{Department of Economics, Babe\c{s}-Bolyai University, 400591 Cluj-Napoca, Romania}
\email{bariczocsi@yahoo.com}

\author[B.A. Bhayo]{Barkat Ali Bhayo}
\address{Department of Mathematical Information Technology, University of Jyv\"askyl\"a,
40014 Jyv\"askyl\"a, Finland} \email{bhayo.barkat@gmail.com}

\author[T.K. Pog\'any]{Tibor K. Pog\'any$^{\ddagger}$}
\address{Faculty of Maritime Studies, University of Rijeka, 51000 Rijeka, Croatia} \email{poganj@pfri.hr}

\thanks{$^{\dagger}$The work of \'A. Baricz was supported by a research grant of the Babe\c{s}-Bolyai University for young researchers, project number GTC\underline{ }34021. \,
$^{\ddagger}$Corresponding author}

\numberwithin{equation}{section}

\pagestyle{headings}
\setcounter{page}{1}

\theoremstyle{plain}
\newtheorem{theorem}{Theorem}
\newtheorem{lemma}{Lemma}
\newtheorem*{remark}{Remark}

\setlength{\paperwidth}{210mm} \setlength{\paperheight}{297mm}
\setlength{\oddsidemargin}{0mm} \setlength{\evensidemargin}{0mm}
\setlength{\topmargin}{-20mm} \setlength{\headheight}{10mm}
\setlength{\headsep}{13mm} \setlength{\textwidth}{160mm}
\setlength{\textheight}{240mm} \setlength{\footskip}{15mm}
\setlength{\marginparwidth}{0mm} \setlength{\marginparsep}{0mm}

\begin{document}
\allowdisplaybreaks

\begin{abstract}
Various miscellaneous functional inequalities are deduced for the so-called generalized inverse trigonometric and hyperbolic functions. For instance, functional inequalities for sums, difference and quotient of generalized inverse trigonometric and hyperbolic functions are given, as well as some Gr\"unbaum inequalities with the aid of the classical Bernoulli inequality. Moreover, by means of certain already derived bounds, bilateral bounding inequalities are obtained for the generalized hypergeometric ${}_3F_2$ Clausen function.\\

\noindent {{\it MSC 2010}: 33B99, 26D15, 33C20, 33C99} \\

\noindent {{\it Keywords}: generalized inverse trigonometric functions; generalized inverse hyperbolic functions; functional inequalities; generalized hypergeometric ${}_3F_2$ function}
\end{abstract}

\def\thefootnote{}
\footnotetext{ \texttt{\tiny File:~\jobname .tex,
          printed: \number\year-0\number\month-\number\day,
          \thehours.\ifnum\theminutes<10{0}\fi\theminutes}
} \makeatletter\def\thefootnote{\@arabic\c@footnote}\makeatother

\maketitle
%%%%%%%%%%%%%%%%%%%%%%%%%%%%%%%%%%%%%%%%%%%%%%%%%%%%%%%%%%%%%%%%%%%%
%%%%%%%%%%%%%%%%%%%%%%% Section %%%%%%%%%%%%%%%%%%%%%%%%%%%%%%%%%%%%

\section{\bf Introduction and preliminary results}

For given complex numbers $a,b$ and $c$ with $c\neq0,-1,-2,\ldots$,
the \emph{Gaussian hypergeometric function} ${}_2F_1$ is the
analytic continuation to the slit place $\mathbb{C}\setminus[1,\infty)$ of the series
   $$F\left(a,b;c;z\right) = {}_2F_1\left(a,b;c;z\right)=\sum_{n\geq0}\frac{(a,n)(b,n)}
                             {(c,n)}\frac{z^n}{n!},\qquad |z|<1.$$
Here $(a,n)$ is the Pochhammer symbol (rising factorial) $(\cdot,n):\mathbb{C}\to \mathbb{C}$, defined by
   $$(z,n) = \frac{\Gamma(z+n)}{\Gamma(z)} = \prod_{i=1}^n(z+i-1)$$
for $n\in\mathbb{Z}$, see \cite{as}. Special functions, such the classical \emph{gamma function} $\Gamma$, the {\it digamma function} $\psi$ and the \emph{beta function}  $B(\cdot,\cdot)$ have close relation with hypergeometric function. These functions for $x,y>0$ are defined by
   $$\Gamma(x) = \int^\infty_0 e^{-t}t^{x-1}\,dt,\quad
       \psi(x) = \frac{\Gamma'(x)}{\Gamma(x)},\quad
        B(x,y) = \frac{\Gamma(x)\Gamma(y)}{\Gamma(x+y)},$$
respectively.

The eigenfunction $\sin_p$ of the of the so-called one-dimensional $p$-Laplacian problem
\cite{dm}
$$-\Delta_p u=-\left(|u'|^{p-2}u'\right)'
=\lambda|u|^{p-2}u,\,u(0)=u(1)=0,\ \ \ p>1,$$
is the inverse function of $F:(0,1)\to \left(0,\frac{\pi_p}{2}\right)$, defined as
$$F(x)={\rm arcsin}_p(x)=\int^x_0(1-t^p)^{-\frac{1}{p}}dt,$$
where
$$\pi_p=\frac{2}{p}\int^1_0(1-s)^{-\frac{1}{p}}s^{\frac{1}{p}-1}ds=\frac{2}
{p}\,B\left(1-\frac{1}{p},\frac{1}{p}\right)=\frac{2 \pi}{p\,\sin\left(\frac{\pi}{p}\right)}\,.$$
The function ${\rm arcsin}_p$ is called as the generalized inverse sine function, and coincides with usual inverse sine function
for $p=2$. Similarly, the other generalized inverse trigonometric and hyperbolic functions
${\rm arccos}_p:(0,1)\to \left(0,\frac{\pi}{2}\right),\,{\rm arctan}_p:(0,1)\to (0,b_p),\,{\rm arcsinh}_p:(0,1)\to(0,c_p),\,
{\rm arctanh}_p:(0,1)\to (0,\infty)$, where
$$\,b_p=\frac{1}{2p}\left(\psi\left(\frac{1+p}{2p}\right)
-\psi\left(\frac{1}{2p}\right)\right)=2^{-\frac{1}{p}}
F\left(\frac{1}{p},\frac{1}{p};1+\frac{1}{p};\frac{1}{2}\right),\ c_p=\left(\frac{1}{2}\right)^{\frac{1}{p}}F\left(1,\frac{1}{p};1+\frac{1}{p},\frac{1}{2}\right),$$
are defined as follows
\begin{eqnarray*}
{\rm arccos}_p(x)&=&\int^{(1-x^p)^{\frac{1}{p}}}_0(1-t^p)^{-\frac{1}{p}}dt,\\
{\rm arctan}_p(x)&=&\int^x_0(1+t^p)^{-1}dt,\\
{\rm arcsinh}_p(x)&=&\int^x_0(1+t^p)^{-\frac{1}{p}}dt,\\
{\rm arctanh}_p(x)&=&\int^x_0(1-t^p)^{-1}dt.
\end{eqnarray*}

These functions are the generalizations of the usual elementary inverse trigonometric and hyperbolic functions and are the inverse of the so-called generalized trigonometric and hyperbolic functions, introduced by P. Lindqvist \cite{l}, see also \cite{be,egl,lp,ti} for more details. Recently, there has been a vivid interest on the generalized inverse trigonometric and hyperbolic functions, we refer to the papers \cite{bbv, bbk, bsand,bvjapprox,bvarxiv,jq,kvz} and to the references therein. In this paper we make a contribution to the subject by showing various miscellaneous functional inequalities for the generalized inverse trigonometric and hyperbolic functions. The paper is organized as follows. In this section we list some preliminary results which will be used in the sequel. Section 2 contains functional inequalities for sums, difference and quotient of generalized inverse trigonometric and hyperbolic functions, as well as some Gr\"unbaum inequalities with the aid of the classical Bernoulli inequality. In section 3 we obtain some lower and upper bounds for the generalized hypergeometric ${}_3F_2$ Clausen function by using certain already derived bounds for generalized inverse trigonometric and hyperbolic functions. Finally, in section 4 we give a comparison of the bounds of this paper with the known bounds in the literature.

For the expression of the above generalized inverse trigonometric and hyperbolic functions in terms of hypergeometric functions, see the following lemma.

\begin{lemma}\label{hypform}
For $p>0$ and $x\in(0,1)$, we have
\begin{eqnarray*}
{\rm arcsin}_p(x)&=& x\,F\left(\frac{1}{p},\frac{1}{p};1+\frac{1}{p};x^p\right),\\
{\rm arctan}_p(x)&=& x
F\left(1,\frac{1}{p};1+\frac{1}{p};-x^p\right)=\left(\frac{x^p}{1+x^p}\right)^{\frac{1}{p}}
F\left(\frac{1}{p},\frac{1}{p};1+\frac{1}{p};\frac{x^p}{1+x^p}\right),\\
{\rm arcsinh}_p(x)&=&
xF\left(\frac{1}{p}\,,\frac{1}{p};1+\frac{1}{p};-x^p\right)=\left(\frac{x^p}{1+x^p}\right)^{\frac{1}{p}}
F\left(1,\,\frac{1}{p};\,1+\frac{1}{p};\,\frac{x^p}{1+x^p}\right),\\
{\rm arctanh}_p(x)&=&xF\left
(1\,,\frac{1}{p};1+\frac{1}{p};x^p\right).
\end{eqnarray*}
\end{lemma}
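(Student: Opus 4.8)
The plan is to treat all four functions by the same three-step scheme: expand the integrand as a power series in $t^p$, integrate term by term, and re-assemble the resulting series into Gaussian hypergeometric form. First I would invoke the generalized binomial theorem in the form
$$(1-z)^{-a}=\sum_{n\geq0}\frac{(a,n)}{n!}\,z^n,\qquad |z|<1,$$
applied with $z=\pm t^p$ and $a\in\{1/p,1\}$ (the case $a=1$ reducing to the geometric series, where $(1,n)/n!=1$). For $x\in(0,1)$ and $t\in(0,x)$ we have $t^p<1$, so each series converges uniformly on $[0,x]$, which legitimizes integrating term by term over $(0,x)$.

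Carrying this out for ${\rm arcsin}_p$, for instance, gives
$${\rm arcsin}_p(x)=\int_0^x(1-t^p)^{-\frac1p}\,dt=\sum_{n\geq0}\frac{(1/p,n)}{n!}\frac{x^{np+1}}{np+1}=x\sum_{n\geq0}\frac{(1/p,n)}{n!}\frac{(x^p)^n}{np+1},$$
and the analogous expressions hold for the other three functions, with an extra factor $(-1)^n$ whenever the integrand carries $(1+t^p)$. The single algebraic fact that turns these into hypergeometric series is the Pochhammer identity
$$\frac{1}{np+1}=\frac1p\cdot\frac{1}{n+\frac1p}=\frac{\left(\frac1p,n\right)}{\left(1+\frac1p,n\right)},$$
which follows at once from $(c,n)=\Gamma(c+n)/\Gamma(c)$ together with $\Gamma(1+\frac1p)=\frac1p\Gamma(\frac1p)$ and $\Gamma(1+\frac1p+n)=(\frac1p+n)\Gamma(\frac1p+n)$. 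Substituting this into the four series and recalling the definition of $F$ yields the first representation in each line; for ${\rm arcsin}_p$ one reads off $x\,F(\frac1p,\frac1p;1+\frac1p;x^p)$, and similarly for ${\rm arctan}_p$, ${\rm arcsinh}_p$ and ${\rm arctanh}_p$.

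For the two alternate representations (the right-hand forms for ${\rm arctan}_p$ and ${\rm arcsinh}_p$) I would not integrate afresh but instead transform the series already obtained, using Pfaff's transformation $F(a,b;c;z)=(1-z)^{-b}F(c-a,b;c;\frac{z}{z-1})$. Applied to ${\rm arctan}_p(x)=x\,F(1,\frac1p;1+\frac1p;-x^p)$ with $z=-x^p$ this produces the factor $(1+x^p)^{-1/p}$ and parameters $(\frac1p,\frac1p;1+\frac1p)$ with argument $\frac{x^p}{1+x^p}$; absorbing $x(1+x^p)^{-1/p}=(\frac{x^p}{1+x^p})^{1/p}$ gives the stated form. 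The companion transformation $F(a,b;c;z)=(1-z)^{-a}F(a,c-b;c;\frac{z}{z-1})$ applied to ${\rm arcsinh}_p(x)=x\,F(\frac1p,\frac1p;1+\frac1p;-x^p)$ yields the right-hand ${\rm arcsinh}_p$ form in the same manner.

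The computations are essentially bookkeeping; the only points needing care are the uniform-convergence justification for term-by-term integration (handled above since $x<1$) and the correct choice of Pfaff transformation for each alternate form, since the two Pfaff identities move the exponent $1/p$ into different parameter slots. I do not anticipate a genuine obstacle beyond keeping the Pochhammer symbols straight.
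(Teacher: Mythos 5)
Your proof is correct, but it does not follow the route of the paper's designated proof. The paper verifies the identity by differentiation: using $\frac{d}{dz}F(a,b;c;z)=\frac{ab}{c}F(a+1,b+1;c+1;z)$ together with two Gauss contiguous relations and $F(a,b;b;z)=(1-z)^{-a}$, it shows that $\frac{d}{dx}\bigl[xF(1,m;1+m;x^{1/m})\bigr]=(1-x^{1/m})^{-1}$ with $m=1/p$, so the hypergeometric expression and ${\rm arctanh}_p$ have the same derivative and the same value at $0$; the remaining cases are declared ``similar''. Your argument --- binomial/geometric expansion of the integrand, termwise integration justified by uniform convergence on $[0,x]$, and the Pochhammer identity $\frac{1}{np+1}=\frac{(\frac1p,n)}{(1+\frac1p,n)}$ --- is precisely the ``alternative derivation procedure'' that the paper itself sketches in the Remark following the lemma. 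It is a legitimate and arguably more self-contained route: it constructs the hypergeometric series directly rather than verifying it a posteriori, and it needs no contiguous relations. You also go a step further than the paper's proof by explicitly deriving the second representations of ${\rm arctan}_p$ and ${\rm arcsinh}_p$ via the two Pfaff transformations, correctly choosing in each case the variant that extracts the exponent $\frac1p$; the paper's proof is silent on these alternate forms, which cannot be obtained by merely repeating the ${\rm arctanh}_p$ computation. The only small point worth making explicit is the absorption step $x(1+x^p)^{-1/p}=\left(\frac{x^p}{1+x^p}\right)^{1/p}$ for $x>0$, which you do state.
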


\begin{proof}[\bf Proof]
The proof of all equalities is similar, here we only give the proof of the last equality, which can be written as
   $$\int_0^x{\frac{1}{1-t^{\frac{1}{m}}}dt}=xF\left(1,m;1+m;x^{\frac{1}{m}}\right),$$
setting $\frac{1}{p} = m>0.$
By using the differentiation formula \cite[15.2.1]{as}
\begin{equation}\label{dF}
\frac{d}{dz}F\left(a,b;c;z\right)=\frac{ab}{c}F\left(a+1,b+1;c+1;z\right)
\end{equation}
we get
\begin{equation}\label{deri}
\frac{d}{dx}\left[xF\left(1,m;1+m;x^{\frac{1}{m}}\right)\right]=F\left(1,m;1+m;x^{\frac{1}{m}}\right)+\frac{x^{\frac{1}{m}}}{1+m}F\left(2,1+m;2+m;x^{\frac{1}{m}}\right).
\end{equation}
The right-hand side can be simplified by using the Gauss relationship for contiguous hypergeometric functions, namely
\begin{equation}\label{gc2}
zF\left(a+1,b+1;c+1;z\right)=\frac{c}{a-b}\left(F\left(a,b+1;c;z\right)
-F\left(a+1,b;c;z\right)\right),
\end{equation}
\begin{equation}\label{gc1}
(b-a)F\left(a,b;c;z\right)+aF\left(a+1,b;c;z\right)=bF\left(a,b+1;c;z\right),
\end{equation}	
see \cite[p. 58]{em} and \cite[(15.5.12)]{olbc}. After simplification and utilizing
$$F\left(a,b;b;z\right)=(1-z)^{-a}$$
see \cite[(15.4.6)]{olbc}, the right-hand side of (\ref{deri}) simplifies to
$\left(1-x^{\frac{1}{m}}\right)^{-1}$. This implies the proof.
\end{proof}

\begin{remark}
{\em Alternatively, the following derivation procedure can be considered. Since $x \in (0,1)$, the
integrand can be presented by the associated geometric series, which is termwise integrable for all $p>1$
   \[ \int_0^x \frac1{1-t^p}\,dt = \sum_{n \geq 0} \int_0^x t^{np}\,dt
              = \sum_{n \geq 0} \frac{x^{np+1}}{np+1}\, .\]
Since
   \begin{equation} \label{FX}
      \frac1{np+1} = \frac1p\, \frac1{\frac1p+n}
                   = \frac1p\, \frac{\Gamma\left(\frac1p+n\right)}{\Gamma\left(\frac1p+1+n\right)}
                   = \frac{\left(\frac1p,n\right)}{\left(\frac1p+1,n\right)},
   \end{equation}
we conclude
   \[ {\rm arctanh}_p(x) = x\,\sum_{n \geq 0} \frac{(1,n)\,\left(\frac1p,n\right)}{\left(\frac1p+1,n\right)}\,
                           \frac{(x^p)^m}{n!}\, ,\]
which proves the hypergeometric representation of generalized inverse hyperbolic tangent.

Applying the same procedure we also have
   \[ \int_0^x \frac1{(1-t^p)^{\frac1p}}\,dt = \sum_{n \geq 0}(-1)^n \binom{-\frac1p}n
      \int_0^x t^{np}\,dt = \sum_{n \geq 0} (-1)^n \binom{-\frac1p}n \frac{x^{np+1}}{np+1}\, .\]
Bearing in mind the fact
   \[ (-1)^n \binom{-\frac1p}n = (-1)^n \frac{\left(-\frac1p\right)\left(-\frac1p-1\right) \cdots \left(-\frac1p-n+1\right)}{n!}
       = \frac{\left(\frac1p,n\right)}{n!} \]
in conjunction with \eqref{FX}, we arrive at
   \[ \int_0^x \frac1{(1-t^p)^{\frac1p}}\,dt = x \sum_{n \geq 0}
               \frac{\left(\frac1p,n\right)\left(\frac1p,nt\right)}{\left(\frac1p+1,n\right)}\, \frac{(x^p)^n}{n!}\, ,\]
which confirms the hypergeometric expression for the $\arcsin_p$ function.}
\end{remark}
%%%%%%%%%%%%%%%%%%%%%%%%%%%%%%%%%%%%%%%%%%%%%%%%%%%%%%%%%%%%%%
For the following lemma see \cite[Theorem 1.52]{avvb}.
\begin{lemma}\label{thm1.52}
For $a,b>0$, the function $f,$ defined by
$$f(x)=\displaystyle\frac{1-F\left(a,b;a+b;x\right)}{\log(1-x)},$$
is strictly increasing from $(0,1)$ onto $\displaystyle\left(\frac{ab}{a+b},\frac{1}{B(a,b)}\right)$.
\end{lemma}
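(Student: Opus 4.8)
The plan is to exhibit $f$ as a quotient of two power series with positive coefficients and then apply the power-series form of the monotone l'H\^opital rule (see \cite{avvb}). Put $g(x)=1-F(a,b;a+b;x)$ and $h(x)=\log(1-x)$, so that $f=g/h$ and $g(0)=h(0)=0$. Expanding both numerator and denominator in Maclaurin series gives
$$g(x)=-\sum_{n\geq1}\frac{(a,n)(b,n)}{(a+b,n)}\,\frac{x^n}{n!},\qquad h(x)=-\sum_{n\geq1}\frac{x^n}{n}.$$
The two leading minus signs cancel in the quotient, so that
$$f(x)=\frac{\sum_{n\geq1}a_n x^n}{\sum_{n\geq1}b_n x^n},\qquad a_n=\frac{(a,n)(b,n)}{(a+b,n)\,n!},\quad b_n=\frac1n,$$
where $a_n,b_n>0$ for every $n\geq1$ because $a,b>0$. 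By the cited criterion it then suffices to show that the coefficient ratio $c_n=a_n/b_n$ is strictly increasing in $n$; this alone yields strict monotonicity of $f$ on $(0,1)$.

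To check that $c_n$ increases, write $c_n=(a,n)(b,n)/\big((a+b,n)\,(n-1)!\big)$ and use $(z,n+1)=(z,n)(z+n)$ to obtain
$$\frac{c_{n+1}}{c_n}=\frac{(a+n)(b+n)}{n\,(a+b+n)}.$$
Since $(a+n)(b+n)-n(a+b+n)=ab>0$, the ratio exceeds $1$ for all $n\geq1$, so $c_n$ is strictly increasing and hence $f$ is strictly increasing, as required.

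It remains to identify the endpoint limits. As $x\to0^+$ the quotient of the lowest-order terms gives $\lim_{x\to0^+}f(x)=a_1/b_1=c_1=ab/(a+b)$. For $x\to1^-$ I would invoke the standard logarithmic asymptotic of the Gaussian hypergeometric function in the exceptional case $c=a+b$ (see \cite{as}), namely
$$F(a,b;a+b;x)=-\frac{1}{B(a,b)}\big(\log(1-x)+\psi(a)+\psi(b)+2\gamma\big)+o(1),\qquad x\to1^-.$$
Consequently $1-F(a,b;a+b;x)\sim \frac{1}{B(a,b)}\log(1-x)$, whence $f(x)\to 1/B(a,b)$. Combining continuity with the strict monotonicity already proved, $f$ maps $(0,1)$ bijectively onto the open interval $\big(ab/(a+b),\,1/B(a,b)\big)$.

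The monotonicity part is entirely elementary once the power-series criterion is in place, so the only genuinely delicate point is the behaviour at $x=1$: the hypergeometric series diverges there precisely because $c-a-b=0$, and pinning down the constant $1/B(a,b)$ in front of the logarithm is where the real analytic input (the connection formula for $F$ near the singular point) is needed.
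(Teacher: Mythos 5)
Your proof is correct. Note first that the paper itself does not prove this lemma at all: it is imported verbatim as \cite[Theorem 1.52]{avvb}, so the only comparison available is with the proof in that reference, which proceeds via the monotone form of l'H\^opital's rule applied to the derivative quotient
$$\frac{\frac{d}{dx}\left(1-F(a,b;a+b;x)\right)}{\frac{d}{dx}\log(1-x)}=\frac{ab}{a+b}\,(1-x)F(a+1,b+1;a+b+1;x)=\frac{ab}{a+b}\,F(a,b;a+b+1;x),$$
the last step being Euler's transformation; this is visibly increasing, and the endpoint values follow from Gauss's summation at $x=1$. You instead use the discrete (Biernacki--Krzy\.z) version of the same principle: both $1-F$ and $\log(1-x)$ are power series vanishing at $0$ with negative coefficients, and the coefficient ratio $c_n=\frac{(a,n)(b,n)}{(a+b,n)(n-1)!}$ satisfies $c_{n+1}/c_n=\frac{(a+n)(b+n)}{n(a+b+n)}>1$ because $(a+n)(b+n)-n(a+b+n)=ab>0$. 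That computation is right, the positivity of the denominator coefficients $b_n=1/n$ is the hypothesis the criterion needs, and strict increase of $c_n$ does yield strict increase of the quotient. Your limit at $0$ is the correct $c_1=ab/(a+b)$, and your limit at $1$ correctly invokes the logarithmic connection formula for $F(a,b;a+b;\cdot)$ near $x=1$ (A\&S 15.3.10), which is exactly the ``real analytic input'' you identify; the AVV route gets the same constant $1/B(a,b)$ from Gauss's theorem applied to $F(a,b;a+b+1;1)$. The two arguments are essentially dual to one another; yours has the minor advantage of being entirely elementary on the monotonicity side (no Euler transformation needed), at the cost of requiring the connection formula for the boundary value, which the l'H\^opital route also ultimately needs in some form.
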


The next result will be useful to prove some Gr\"unbaum type inequalities for generalized inverse trigonometric functions. For more details see \cite[Lemma 2.1]{bari}.

\begin{lemma}\label{lem00} Let us consider the function $f : (a,\infty)\to \mathbb{R}$, where $a\geq 0$. If the function $g$, defined by $g(x) =\frac{1}{x}\left[f(x) - 1\right],$ is increasing on $(a,\infty)$, then for the function $h$, defined by $h(x) = f(x^2)$, we
have the following Gr\"unbaum-type inequality
\begin{equation}\label{lemm01}
1 + h(z) \geq  h(x) + h(y),
\end{equation}
where $x, y \geq a$ and $z^2 = x^2+y^2$. If the function $g$ is decreasing, then inequality
(\ref{lemm01}) is reversed.
\end{lemma}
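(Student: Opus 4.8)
The plan is to exploit the defining relation for $g$, rewritten as $f(x) = 1 + x\,g(x)$ for every $x \in (a,\infty)$, and to convert the claimed Gr\"unbaum-type inequality into an elementary monotonicity comparison for $g$. First I would introduce $u = x^2$ and $v = y^2$, so that $z^2 = u+v$; since $h(t)=f(t^2)$, the inequality \eqref{lemm01} is then equivalent to
\begin{equation*}
1 + f(u+v) \geq f(u) + f(v),
\end{equation*}
where $u$, $v$ and $u+v$ all lie in $(a,\infty)$ by the assumptions on $x$ and $y$.

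Next I would insert $f(t) = 1 + t\,g(t)$ on both sides. The constant terms cancel, and the inequality collapses to
\begin{equation*}
(u+v)\,g(u+v) \geq u\,g(u) + v\,g(v),
\end{equation*}
which I would then rearrange, using $(u+v)g(u+v) = u\,g(u+v) + v\,g(u+v)$, into the form
\begin{equation*}
u\bigl[g(u+v) - g(u)\bigr] + v\bigl[g(u+v) - g(v)\bigr] \geq 0.
\end{equation*}

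At this stage the monotonicity hypothesis finishes the argument. Because $u = x^2 \geq 0$ and $v = y^2 \geq 0$, while $u+v \geq u$ and $u+v \geq v$, each of the brackets $g(u+v)-g(u)$ and $g(u+v)-g(v)$ is nonnegative when $g$ is increasing; hence the left-hand side is a sum of two nonnegative quantities and \eqref{lemm01} holds. If $g$ is decreasing, both brackets become nonpositive, the sum is $\le 0$, and the inequality reverses, exactly as stated.

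I do not expect a genuine obstacle here: the whole content is carried by the identity $f(t) = 1 + t\,g(t)$, which turns a superadditivity-type statement for $f$ into the one-line comparison above. The only points deserving care are the bookkeeping of the domain, namely checking that $u$, $v$ and $u+v$ indeed belong to $(a,\infty)$ so that $g$ may legitimately be evaluated there, and the correct tracking of the direction of the inequalities when transferring the monotonicity of $g$ back to $f$.
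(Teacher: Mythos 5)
Your proof is correct. The paper does not actually prove this lemma itself --- it is quoted from \cite[Lemma 2.1]{bari} --- but your argument (substituting $f(t)=1+t\,g(t)$, cancelling the constants, and splitting $(u+v)g(u+v)=u\,g(u+v)+v\,g(u+v)$ to reduce \eqref{lemm01} to the monotonicity of $g$) is exactly the standard proof of that cited result; the only caveat, inherited from the lemma's own wording rather than from your reasoning, is that the hypothesis ``$x,y\geq a$'' should really be read as ``$x^2,\,y^2>a$'' so that $h$ is defined, which is automatic in the paper's applications where $a=0$.
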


%%%%%%%%%%%%%%%%%%%%%%%%%%%%%%%%%%%%%%%%%%%
\section{\bf Inequalities for generalized inverse trigonometric and hyperbolic functions}
\setcounter{equation}{0}

Our first main result is the following theorem.

\begin{theorem}\label{lem1} For $p>1$ and $x\in(0,1)$, we have the inequalities
\begin{equation}\label{ineq1}\left(\frac{{\rm arcsin}_p\,x}{x}\right)^{p} <\frac{{\rm arctanh}_p\,x}{x}<
\frac{{\rm arcsin}_p\,x}{x\sqrt[p]{1-x^p}}\,.\end{equation}
\begin{equation}\label{ineqqwe}\left(\frac{x^p}{1+x^p}\right)^{\frac{1}{p}-1}\left({\rm arcsin}_p\left(\frac{x^p}{1+x^p}\right)\right)^p
< {\rm arcsinh}_p(x)<(1+x^p)^{\frac{1}{p}}{\rm arcsin}_p\,\left(\left(\frac{x^p}{1+x^p}\right)^{\frac{1}{p}}\right).\end{equation}
Moreover, the second inequality in \eqref{ineq1} holds true for $p>0$ and $x\in(0,1).$
\end{theorem}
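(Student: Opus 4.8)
The plan is to use the hypergeometric representations from Lemma~\ref{hypform} to rewrite all four quantities as values of the Gaussian hypergeometric function, and then to exploit the monotonicity statement of Lemma~\ref{thm1.52}. Recall from Lemma~\ref{hypform} that $\frac{{\rm arcsin}_p x}{x}=F\!\left(\tfrac1p,\tfrac1p;1+\tfrac1p;x^p\right)$ and $\frac{{\rm arctanh}_p x}{x}=F\!\left(1,\tfrac1p;1+\tfrac1p;x^p\right)$. Setting $a=\tfrac1p$ and $b=\tfrac1p$ in Lemma~\ref{thm1.52} (so $a+b=\tfrac2p$) does not directly match the parameter $1+\tfrac1p$ appearing here, so the first step is to check that the parameter bookkeeping works out. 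The natural pairing is $a=1$, $b=\tfrac1p$, giving $a+b=1+\tfrac1p$, which is exactly the $c$-parameter in the ${\rm arctanh}_p$ representation; thus Lemma~\ref{thm1.52} applies cleanly to $\frac{{\rm arctanh}_p x}{x}$.

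For the \emph{second} inequality of \eqref{ineq1}, which is the only part claimed for the full range $p>0$, I would proceed as follows. By Lemma~\ref{thm1.52} with $a=1,b=\tfrac1p$, the function $x\mapsto\frac{1-F(1,\tfrac1p;1+\tfrac1p;x)}{\log(1-x)}$ is increasing on $(0,1)$ and hence bounded above by its limit $\frac{1}{B(1,1/p)}=\frac1p$. Writing $F(1,\tfrac1p;1+\tfrac1p;x^p)=\frac{{\rm arctanh}_p x}{x}$ and rearranging the resulting inequality $1-F\le -\frac1p\log(1-x^p)$ should convert into a comparison with $(1-x^p)^{-1/p}=\exp\!\left(-\tfrac1p\log(1-x^p)\right)$, which is the quantity $\frac{1}{\sqrt[p]{1-x^p}}$ appearing on the right. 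The cleanest route is probably to compare the power series of $F(1,\tfrac1p;1+\tfrac1p;x^p)$ against that of $F(\tfrac1p,\tfrac1p;1+\tfrac1p;x^p)\cdot(1-x^p)^{-1/p}$ coefficientwise, using that the coefficients $\frac{(1/p,n)}{(1+1/p,n)}=\frac{1}{np+1}$ are dominated by the corresponding coefficients of the right-hand product. I expect the second inequality in \eqref{ineq1} to reduce to a clean term-by-term series domination valid for every $p>0$, which is why it holds in the wider range.

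For the first inequality in \eqref{ineq1}, namely $\left(\tfrac{{\rm arcsin}_p x}{x}\right)^p<\tfrac{{\rm arctanh}_p x}{x}$, I would compare $F(\tfrac1p,\tfrac1p;1+\tfrac1p;x^p)^p$ with $F(1,\tfrac1p;1+\tfrac1p;x^p)$. Here the restriction $p>1$ is essential because the exponent $p$ on the left is used through an elementary convexity/Bernoulli-type estimate: raising a power series with positive coefficients to the power $p>1$ can be bounded termwise, and the lower bound $\frac{ab}{a+b}$ from Lemma~\ref{thm1.52} (which equals $\frac{1}{1+p}$ for the parameters $a=1,b=\tfrac1p$) furnishes the needed slack. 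The double inequality \eqref{ineqqwe} for ${\rm arcsinh}_p$ is then obtained by the same method applied to the second hypergeometric representations in Lemma~\ref{hypform}, after the substitution $x\mapsto\left(\tfrac{x^p}{1+x^p}\right)^{1/p}$; indeed ${\rm arcsinh}_p(x)$ and ${\rm arcsin}_p\!\left(\left(\tfrac{x^p}{1+x^p}\right)^{1/p}\right)$ share the same $F$-factor $F\!\left(1,\tfrac1p;1+\tfrac1p;\tfrac{x^p}{1+x^p}\right)$, so \eqref{ineqqwe} should follow by transporting \eqref{ineq1} through this change of variable and keeping track of the Jacobian-type factors $(1+x^p)^{1/p}$ and $\left(\tfrac{x^p}{1+x^p}\right)^{1/p-1}$.

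The main obstacle I anticipate is the first inequality in \eqref{ineq1}: bounding the $p$-th power of a hypergeometric series below by another hypergeometric series is not a direct consequence of Lemma~\ref{thm1.52}, and will require either a careful coefficientwise argument combined with the Bernoulli inequality $(1+t)^p\ge 1+pt$, or an auxiliary monotonicity result for the ratio of the two $F$-values. Verifying that the endpoint constants $\frac{ab}{a+b}$ and $\frac{1}{B(a,b)}$ from Lemma~\ref{thm1.52} line up correctly with the exponent $p$ — and that the resulting inequalities are strict on the open interval $(0,1)$ — is the delicate bookkeeping step; everything else is power-series manipulation and change of variables that I expect to be routine.
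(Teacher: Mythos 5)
There is a genuine gap, and it is exactly where you anticipated it: the first inequality in \eqref{ineq1} (and with it the left-hand side of \eqref{ineqqwe}) is never proved. Your fallback suggestion does not work: to show $F\left(\tfrac1p,\tfrac1p;1+\tfrac1p;x^p\right)^p<F\left(1,\tfrac1p;1+\tfrac1p;x^p\right)$ you need an \emph{upper} bound on $(1+S)^p$ where $S$ is a positive power series, whereas the Bernoulli inequality $(1+S)^p\ge 1+pS$ for $p>1$ bounds it from \emph{below}; a termwise comparison of the $p$-th power of a series with another series is likewise not available in any direct form. The ingredient the paper actually uses, and which is absent from your plan, is Neuman's result that $a\mapsto F(a,b;c;x)$ is logarithmically convex on $(0,\infty)$ for $b,c>0$, $x\in(0,1)$; combined with the monotone form of l'Hospital's rule (since $\log F(a,b;c;x)\to0$ as $a\to0$) this shows that $a\mapsto F(a,b;c;x)^{1/a}$ is increasing in $a$, and comparing the values at $a=\tfrac1p<1$ and $a=1$ gives the first inequality of \eqref{ineq1} at once; the same argument at the argument $\tfrac{x^p}{1+x^p}$ gives the left side of \eqref{ineqqwe}. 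This is the one idea your proposal would have to import to become a proof.

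On the second inequality of \eqref{ineq1} your first instinct is sound and, if you commit to it, actually yields a cleaner argument than the paper's: Lemma~\ref{thm1.52} with $a=1$, $b=\tfrac1p$ gives $F\left(1,\tfrac1p;1+\tfrac1p;x^p\right)<1-\tfrac1p\log(1-x^p)$ (note the sign: the correct rearrangement is $F-1<-\tfrac1p\log(1-x^p)$, not $1-F\le\cdots$), then $1+t<e^t$ gives $1-\tfrac1p\log(1-x^p)<(1-x^p)^{-1/p}$, and finally $F\left(\tfrac1p,\tfrac1p;1+\tfrac1p;x^p\right)>1$ closes the chain for all $p>0$. You should finish this chain rather than pivot to the unverified coefficientwise domination. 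The paper instead reaches \eqref{wert} by combining the same Lemma~\ref{thm1.52} bound with the lower bound $\left(1+\tfrac{x^p}{p(1+p)}\right)x<{\rm arcsin}_p(x)$ and an auxiliary monotonicity of $y\mapsto y(1-\log y)$. Your treatment of \eqref{ineqqwe} via the identity ${\rm arcsinh}_p(x)={\rm arctanh}_p\bigl(\bigl(\tfrac{x^p}{1+x^p}\bigr)^{1/p}\bigr)$ and the substitution $x\mapsto\bigl(\tfrac{x^p}{1+x^p}\bigr)^{1/p}$, using $1-\tfrac{x^p}{1+x^p}=\tfrac1{1+x^p}$, is exactly what the paper does for the right-hand side and is fine.
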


\begin{proof}[\bf Proof]
We know (see \cite{ne4}) that for $b,c>0$ and $x\in(0,1)$ the function $a\mapsto F(a,b;c;x)$ is logarithmically convex on $(0,\infty).$ This implies that $a\mapsto \left[\partial F(a,b;c;x)/\partial a\right]/F(a,b;c;x)$ is increasing on $(0,\infty)$. By using the monotone form of l'Hospital's rule we obtain that the function
$$a\mapsto \frac{\log F(a,b;c;x)}{a}=\frac{\log F(a,b;c;x)-\lim\limits_{a\to 0}\log F(a,b;c;x) }{a-0}$$
is also increasing on $(0,\infty)$ for $b,c>0$ and $x\in(0,1).$ Consequently, the function $a\mapsto F(a,b;c;x)^{1/a}$ is also increasing on $(0,\infty)$ for $b,c>0$ and $x\in(0,1).$ Thus, we get
$$\frac{x\,F\left(1,\frac{1}{p};1+\frac{1}{p};x^p\right)}{x}>
\left(\frac{x\,F\left(\frac{1}{p},\frac{1}{p};1+\frac{1}{p};x^p\right)}{x}\right)^{p},$$
and the first inequality in \eqref{ineq1} follows. Similarly, if use again the fact that the function $a\mapsto F(a,b;c;x)^{1/a}$ is increasing on $(0,\infty)$ for $b,c>0$ and $x\in(0,1),$ we obtain that for all $p>1$ and $x\in(0,1)$ the next inequality follows
$$\left(\frac{x^p}{1+x^p}\right)^{\frac{1}{p}}F\left(1,\frac{1}{p};1+\frac{1}{p};\frac{x^p}{1+x^p}\right)
>\left(\frac{x^p}{1+x^p}\right)^{\frac{1}{p}}\left(F\left(\frac{1}{p},\frac{1}{p};1+\frac{1}{p};\frac{x^p}{1+x^p}\right)\right)^p,$$
which is equivalent to the left-hand side of \eqref{ineqqwe}.

For the second inequality in \eqref{ineq1}, we consider the function $f:(0,1)\to(0,1),$ defined by $f(y)=y(1-\log y),$
which is increasing. In view of the above function we obtain that the function $x\mapsto (1-x^p)^{\frac{1}{p}}(1-\log((1-x^p)^{\frac{1}{p}}))$
is decreasing on $(0,1)$. It follows that
$$0<1+\frac{x^p}{p(1+p)}-1<1+\frac{x^p}{p(1+p)}-(1-x^p)^{\frac{1}{p}}\left(1-\log\left((1-x^p)^{\frac{1}{p}}\right)\right),$$
which is equivalent to
\begin{equation}\label{logo}(1-x^p)^{\frac{1}{p}}<\frac{1+\frac{x^p}{p(1+p)}}{1-\frac{1}{p}\log(1-x^p)}.\end{equation}
On the other hand, for $p>1$ and $x\in(0,1)$ we have \cite[Theorem 1.1]{bvjapprox}
\begin{equation}\label{ineqarsin}\left(1+\frac{x^p}{p(1+p)}\right)x<
{\rm arcsin}_p\,x.\end{equation}
Moreover, by Lemma \ref{thm1.52} the function
$$x\mapsto \frac{\displaystyle1-\frac{{\rm arctanh}_p(x)}{x}}{\log(1-x^p)}$$
is increasing from $(0,1)$ onto $\left(\frac{1}{p+1},\frac{1}{p}\right),$ and consequently for all $p>0$ and $x\in(0,1)$ we have
\begin{equation}\label{ineqartanh}
x\left(1-\frac{1}{1+p}\log(1-x^p)\right)< {\rm arctanh}_p\,x
< x\left(1-\frac{1}{p}\log(1-x^p)\right).
\end{equation}
Now, combining \eqref{ineqarsin} with the right-hand side of \eqref{ineqartanh} we obtain the inequality
$$(1-x^p)^{\frac{1}{p}}<\frac{1+\frac{x^p}{p(1+p)}}{1-\frac{1}{p}\log(1-x^p)}
<F\left(\frac{1}{p},\frac{1}{p};1+\frac{1}{p};x^p\right)F\left(1,\frac{1}{p};1+\frac{1}{p};x^p\right)^{-1}\,,$$
which is equivalent to
\begin{equation}\label{wert}F\left(1,\frac{1}{p};1+\frac{1}{p};x^p\right)<F\left(\frac{1}{p},\frac{1}{p};1+\frac{1}{p};x^p\right)(1-x^p)^{-\frac{1}{p}}.\end{equation}
With this we proved the second inequality of \eqref{ineq1}. Now, for the second inequality of \eqref{ineqqwe} we note that from Lemma \ref{hypform}
we get that
$${\rm arcsinh}_p(x)={\rm arctanh}_p\left(\left(\frac{x^p}{1+x^p}\right)^{\frac{1}{p}}\right).$$
Thus, if we change $x$ to $\left(\frac{x^p}{1+x^p}\right)^{\frac{1}{p}}$ in \eqref{wert} we get the second inequality in \eqref{ineqqwe}.
\end{proof}

The next theorem gives some monotonicity results, as well as some lower and upper bounds for the functions ${\rm arcsinh}_p$ and ${\rm arctan}_p.$

\begin{theorem}\label{thmbar}
The functions
$$x\mapsto \frac{xF\left(\frac{1}{p},1+\frac{1}{p};2+\frac{1}{p};-x^p\right)}{{\rm arcsinh}_p(x)}, \qquad x\mapsto \frac{xF\left(2,\frac{1}{p};2+\frac{1}{p};-x^p\right)}{{\rm arctan}_p(x)}$$
are decreasing on $(0,1)$ for all $p>0,$ while the function
$$x\mapsto\frac{{\rm arcsinh}_p(x)}{xF\left(-1+\frac{1}{p},\frac{1}{p};\frac{1}{p};-x^p\right)}$$
is also decreasing on $(0,1)$ for all $p\in(0,1).$ Consequently, for all $p\in(0,1)$ and $x\in(0,1)$ we have
\begin{equation}\label{bounds}
xF\left(\frac{1}{p},1+\frac{1}{p};2+\frac{1}{p};-x^p\right)<{\rm arcsinh}_p(x)<xF\left(-1+\frac{1}{p},\frac{1}{p};\frac{1}{p};-x^p\right).
\end{equation}
Moreover, the left-hand side of \eqref{bounds} holds true for all $x\in(0,1)$ and $p>0,$ as well as the inequality
$${xF\left(2,\frac{1}{p};2+\frac{1}{p};-x^p\right)}<{{\rm arctan}_p(x)}.$$
\end{theorem}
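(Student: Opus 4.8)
The plan is to reduce all three monotonicity assertions to a single transparent mechanism. Put $m=1/p$, so that by Lemma~\ref{hypform} one has ${\rm arcsinh}_p(x)=xF(m,m;1+m;-x^p)$ and ${\rm arctan}_p(x)=xF(1,m;1+m;-x^p)$. After the common factor $x$ cancels, each of the three quotients becomes a ratio of two Gauss functions in the \emph{negative} argument $-x^p$. The difficulty is that the Biernacki--Krzy\.z lemma on quotients of power series---if $b_n>0$ and $a_n/b_n$ is monotone then $\sum a_ny^n/\sum b_ny^n$ is correspondingly monotone---requires a \emph{positive} argument; with $-x^p$ the alternating signs destroy the sign of the cross product $f'g-fg'$. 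I would defeat this with the Pfaff transformation $F(a,b;c;z)=(1-z)^{-a}F(a,c-b;c;\frac{z}{z-1})=(1-z)^{-b}F(c-a,b;c;\frac{z}{z-1})$, see \cite{olbc}, applied so as to \emph{fix the parameter shared by numerator and denominator}. With $z=-x^p$ this maps the argument to $y=x^p/(1+x^p)\in(0,\frac12)$, which is increasing in $x$; because the fixed parameter is common to both functions, the resulting prefactor $(1+x^p)^{-a}$ (resp.\ $(1+x^p)^{-b}$) is identical in numerator and denominator and cancels.

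Carrying this out---fixing $a=m$ in the first quotient, $b=m$ in the second, and $a=m$ in the third, where I first use $F(a,b;b;z)=(1-z)^{-a}$ to recognise the denominator as $xF(-1+m,m;m;-x^p)=x(1+x^p)^{1-m}$---I obtain
\[ \frac{xF(m,1+m;2+m;-x^p)}{{\rm arcsinh}_p(x)}=\frac{F(m,1;2+m;y)}{F(m,1;1+m;y)},\qquad \frac{xF(2,m;2+m;-x^p)}{{\rm arctan}_p(x)}=\frac{F(m,m;2+m;y)}{F(m,m;1+m;y)}, \]
and
\[ \frac{{\rm arcsinh}_p(x)}{xF(-1+m,m;m;-x^p)}=(1-y)\,F(1,m;1+m;y)=\frac{F(1,m;1+m;y)}{F(1,m;m;y)}, \]
the last equality using $F(1,m;m;y)=(1-y)^{-1}$. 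In every case the quotient has the form $F(a,b;c+1;y)/F(a,b;c;y)$.

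For such a quotient the $n$-th Maclaurin coefficients of top and bottom are in the ratio $(c,n)/(c+1,n)=c/(c+n)$, which is decreasing in $n$; hence by the Biernacki--Krzy\.z lemma \cite{avvb} each quotient $F(a,b;c+1;y)/F(a,b;c;y)$ is decreasing in $y\in(0,1)$, and therefore decreasing in $x\in(0,1)$ since $x\mapsto y$ is increasing. (Concretely, once the argument is positive one symmetrises in $n\leftrightarrow k$ to see that the inner sums in $f'g-fg'=\sum_{N\ge1}y^{N-1}\sum_{n+k=N}(n-k)a_nb_k$ are all $\le0$, which is exactly what the Pfaff step secured.) This establishes the three monotonicity statements; since the ratios $c/(c+n)$ are decreasing for every $m>0$, the method in fact delivers the third assertion for all $p>0$ and not merely for $p\in(0,1)$.

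Finally, every one of these quotients tends to $1$ as $x\to0^+$, because each Gauss function tends to $1$ and the factors $x$ cancel. Monotone decrease from the value $1$ therefore forces each quotient to be strictly less than $1$ on $(0,1)$, and reading this off gives precisely the lower bound $xF(m,1+m;2+m;-x^p)<{\rm arcsinh}_p(x)$, the upper bound ${\rm arcsinh}_p(x)<x(1+x^p)^{1-m}=xF(-1+m,m;m;-x^p)$, and the inequality $xF(2,m;2+m;-x^p)<{\rm arctan}_p(x)$, the lower bound and the ${\rm arctan}_p$ inequality holding for all $p>0$. The single genuine obstacle is the opening move: recognising that the Pfaff transformation, taken on the shared parameter, simultaneously renders the argument positive and cancels the prefactor, after which the coefficient-ratio lemma runs automatically.
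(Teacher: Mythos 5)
Your proof is correct, but it takes a genuinely different route from the paper's. The paper invokes Belevitch's integral (Stieltjes-type) representation for the ratio $F(\alpha+1,\beta;\gamma+1;-z)/F(\alpha,\beta;\gamma;-z)$, matches each of the three quotients to such a ratio for suitable $(\alpha,\beta,\gamma)$, and reads off monotonicity in $x$ by differentiating under the integral sign; Belevitch's hypotheses $0\le\alpha\le\gamma$, $0\le\beta\le\gamma$, $\gamma\ge1$ are exactly what force the restriction $p\in(0,1)$ for the third function and a case split $p\le1$ versus $p>1$ for the ${\rm arctan}_p$ quotient. You instead apply the Pfaff transformation on the parameter shared by numerator and denominator, so that the prefactors $(1+x^p)^{-a}$ cancel and the argument becomes $y=x^p/(1+x^p)\in(0,\tfrac12)$, reducing every quotient to the form $F(a,b;c+1;y)/F(a,b;c;y)$ with coefficient ratio $(c,n)/(c+1,n)=c/(c+n)$ decreasing in $n$; the Biernacki--Krzy\.{z} lemma then gives the monotonicity, and the limit $1$ at $x\to0^+$ gives the inequalities. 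I checked all three reductions (including $F(-1+\tfrac1p,\tfrac1p;\tfrac1p;-x^p)=(1+x^p)^{1-\frac1p}$) and they are correct. Your route is more elementary --- no integral representation and no case analysis --- and it genuinely buys more: the third monotonicity statement and the right-hand inequality in \eqref{bounds} come out for all $p>0$ rather than only $p\in(0,1)$, since nothing in your argument corresponds to Belevitch's constraint $\alpha\ge0$. It is worth noting that the paper does use the Pfaff transformation, but only \emph{after} the theorem to restate the bounds in the form \eqref{ineq1}--\eqref{ineq2}; you use it as the engine of the proof. The only polish I would ask for is a precise citation of the quotient-of-power-series lemma and an explicit remark that $x\mapsto y$ is increasing with range inside the disc of convergence, both of which you essentially already have.
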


\begin{proof}[\bf Proof]
We shall use the following result of Belevitch \cite[p. 1032]{belevitch}
$$\frac{F(\alpha+1,\beta;\gamma+1;-z)}{F(\alpha,\beta;\gamma;-z)}=\left\{\begin{array}{ll}\displaystyle S_{\alpha,\beta,\gamma}(z),& \alpha\leq\beta\\  \displaystyle S_{\alpha,\beta,\gamma}(z)+\frac{\gamma(\alpha-\beta)}{2\alpha(\gamma-\beta)},& \alpha>\beta\end{array},\right.$$
where
$$S_{\alpha,\beta,\gamma}(z)=\frac{\Gamma(\gamma)\Gamma(\gamma+1)}{\Gamma(\alpha+1)\Gamma(\beta)\Gamma(\gamma-\alpha)\Gamma(\gamma-\beta+1)}\int_0^1\frac{t^{\alpha+\beta-1}(1-t)^{\gamma-\alpha-\beta}}{(1+zt)\left|F(\alpha,\beta;\gamma;t^{-1})\right|^2}dt$$
and
$0\leq\alpha\leq\gamma,$ $0\leq \beta\leq\gamma,$ $\gamma\geq1,$ $z\in(0,1).$ By choosing $\alpha=\frac{1}{p},$ $\beta=\frac{1}{p},$ $\gamma=1+\frac{1}{p}$ and $z=x^p$ we get
$$\frac{xF\left(1+\frac{1}{p},\frac{1}{p};2+\frac{1}{p};-x^p\right)}{{\rm arcsinh}_p(x)}=\frac{1}{p}\left(1+\frac{1}{p}\right)
\displaystyle\int_0^1\frac{t^{\frac{2}{p}-1}(1-t)^{1-\frac{1}{p}}}{(1+x^pt)\left|F\left(\frac{1}{p},\frac{1}{p};1+\frac{1}{p};t^{-1}\right)\right|^2}dt.$$
Similarly, by choosing $\alpha=\frac{1}{p}-1,$ $\beta=\frac{1}{p},$ $\gamma=\frac{1}{p}$ and $z=x^p$ we get
$$\frac{{\rm arcsinh}_p(x)}{xF\left(-1+\frac{1}{p},\frac{1}{p};\frac{1}{p};-x^p\right)}=\frac{1}{p}\left(\frac{1}{p}-1\right)
\displaystyle\int_0^1\frac{t^{\frac{2}{p}-2}(1-t)^{1-\frac{1}{p}}}{(1+x^pt)\left|F\left(-1+\frac{1}{p},\frac{1}{p};\frac{1}{p};t^{-1}\right)\right|^2}dt.$$
Now, by choosing $\alpha=1,$ $\beta=\frac{1}{p},$ $\gamma=1+\frac{1}{p}$ and $z=x^p$ we get for $p\in(0,1]$
$$\frac{xF\left(2,\frac{1}{p};2+\frac{1}{p};-x^p\right)}{{\rm arctan}_p(x)}=\frac{1}{p^2}\left(1+\frac{1}{p}\right)
\displaystyle\int_0^1\frac{t^{\frac{1}{p}}}{(1+x^pt)\left|F\left(1,\frac{1}{p};1+\frac{1}{p};t^{-1}\right)\right|^2}dt,$$
and for $p>1$
$$\frac{xF\left(2,\frac{1}{p};2+\frac{1}{p};-x^p\right)}{{\rm arctan}_p(x)}=\frac{1}{2}\left(1-\frac{1}{p^2}\right)+\frac{1}{p^2}\left(1+\frac{1}{p}\right)
\displaystyle\int_0^1\frac{t^{\frac{1}{p}}}{(1+x^pt)\left|F\left(1,\frac{1}{p};1+\frac{1}{p};t^{-1}\right)\right|^2}dt.$$

Differentiating both sides of the above relations the monotonicity results follow. Moreover, computing the limit of the functions in zero, the claimed inequalities also follow from the monotonicity results.
\end{proof}

Direct application of Theorem \ref{thmbar} with the use of the following transformation formula
$$F(a,b;c;z)=(1-z)^{-a}F\left(b,c-a;c;-\frac{z}{1-z}\right)$$
gives the following inequalities
   \begin{equation}\label{ineq1}
      l_p(x) < {\rm arcsinh}_p(x) < u_p(x),
   \end{equation}
where
   \[ l_p(x) = \left(\frac{x^p}{1+x^p}\right)^{\frac{1}{p}}
               F\left(1,\frac{1}{p};2+\frac{1}{p};\frac{x^p}{1+x^p}\right),\]
and $u_p(x)=a_1(p)l_p(x)$, with
   \[ a_1(p) = \frac{F\left(1,\frac{1}{p};2+\frac{1}{p};\frac{1}{2}\right)}{F\left(1,\frac{1}{p};1+\frac{1}{p};\frac{1}{2}\right)}.\]
The next consequence of Theorem \ref{thmbar} is
   \begin{equation} \label{ineq2}
      \tilde{l}_p(x) < {\rm arctan}_p(x) < a_2(p)\tilde{l}_p(x),
   \end{equation}
where
   \[ \tilde{l}_p(x) = \left(\frac{\sqrt{x}}{1+x^p}\right)^{2}
             F\left(2,2;2+\frac{1}{p};\frac{x^p}{1+x^p}\right),\]
and $\tilde{u}_p(x)=a_2(p)\tilde{l}_p(x)$, with
   \[ a_2(p) = \frac{2^{2-\frac{1}{p}}F\left(\frac{1}{p},\frac{1}{p};1+\frac{1}{p};\frac{1}{2}\right)}{F\left(2,2;2+\frac{1}{p};\frac{1}{2}\right)}.\]
For the convenience of the reader, we recall the following inequalities again
from \cite{bvarxiv}.
   \begin{equation} \label{oldineq}
      \begin{aligned}
         m_p(x) = \frac{(p(1+p)(1+x^p)+x^p)x}{p(1+p)(1+x^p)^{1+\frac1p}}
                < {\arctan}_p(x) < \frac{2^pb_p\, x}{(1+x^p)^{\frac1p}} = M_p(x),\\
         t_p(x) = \frac{x(1+\frac1{1+p}\log(1+x^p))}{(1+x^p)^{\frac1p}}
                < {\rm arcsinh}_p(x) <
                \frac{x(1+\frac1p \log(1+x^p))}{(1+x^p)^{\frac1p}} = T_p(x),
      \end{aligned}
   \end{equation}
where $x\in(0,1)$ and $p>1.$ In order to compare the bounds in \eqref{ineq1} and \eqref{ineq2} with the corresponding bounds of the functions given in \eqref{oldineq}, we made some computer experiments. From graphics we have seen that our lower bound $\tilde{l}_p$ is not better than the known lower bound $m_p$ from \cite{bvarxiv}, however our new upper bound $\tilde{u}_p$ for $p$ greater than $1$ is better than the known upper bound $M_p.$ Accordingly, we remark that by newly derived bounds \eqref{bounds} we complement the parameter interval for until now not considered $p \in (0,1)$ concerning ${\rm arcsinh}_p$ function, studied until now for $p>1$.

Similarly, our new lower bound $l_p$ is not better than the known lower bound $t_p,$ however, the upper bound $u_p$ complements the known upper bound $T_p.$

For $a,b>0$ and $x,y\in(0,1)$, let
$$Q_g(x,y)=\frac{g(x)+g(y)}{g(x+y-xy)},\,D_g(x,y)=g(x)+g(y)-g(x+y-xy),$$
where
$$g(z)=z F\left(a,b;a+b;z\right).$$
We use also the notation $R(a,b)=-2c-\psi(a)-\psi(b),$
where $c$ is the\emph{ Euler-Mascheroni} constant
$$c=\lim_{n\to \infty}\left(\sum_{k=1}^{n}\frac{1}{k}-\log n\right)=0.577215{\ldots}.$$

Recently Simi\'c and Vuorinen studied the inequalities about the quotient and differences involving hypergeometric functions, see \cite{svu,svu2}, and they proved the following result: if $a,b>0$ such that $ab\leq1$ and $x,y\in(0,1)$, then the following inequalities hold
$$\frac{1}{B(a,b)}\leq Q_g(x,y)\leq B(a,b),$$
$$\frac{B(a,b)-1}{R(a,b)}\leq Q_g(x,y)\leq \frac{2R(a,b)}{B(a,b)-1},$$
$$0\leq D_g(x,y)\leq \displaystyle\frac{2R(a,b)+1}{B(a,b)}-1.$$

Now, let $g_p(z)=z F\left(a,b;a+b;z^p\right).$ If we replace in the proof of the above inequalities the function $g$ by $g_p,$ then we get the following theorem, which we affirm here without proof.

\begin{theorem}\label{thm1} If $p> 1$ and $x,y\in(0,1)$, then we have the following inequalities
$$\frac{1}{p}\leq \frac{{\rm arctanh}_p(x)+{\rm arctanh}_p(y)}{{\rm arctanh}_p(x+y-xy)}
\leq p,$$
$$\frac{p-1}{R\left(1,\frac{1}{p}\right)}\leq  \frac{{\rm arctanh}_p(x)+{\rm arctanh}_p(y)}{{\rm arctanh}_p(x+y-xy)}
\leq \frac{2R\left(1,\frac{1}{p}\right)}{p-1},$$
$$0\leq{\rm arctanh}_p(x)+{\rm arctanh}_p(y)-{\rm arctanh}_p(x+y-xy)
\leq \frac{2R\left(1,\frac{1}{p}\right)-1}{p}-1.$$
\end{theorem}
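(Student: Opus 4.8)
The plan is to recognize the three displayed triples as the Simi\'c--Vuorinen estimates specialized to the hypergeometric form of ${\rm arctanh}_p$. By Lemma \ref{hypform}, ${\rm arctanh}_p(x)=xF\left(1,\frac1p;1+\frac1p;x^p\right)=g_p(x)$ with $a=1$, $b=\frac1p$; here $ab=\frac1p\le1$ for $p\ge1$, while $B\left(1,\frac1p\right)=p$ and $\frac{ab}{a+b}=\frac1{p+1}$, which is precisely why $p,\frac1p$ and $R\left(1,\frac1p\right)$ surface as the constants. Throughout I would write $w=x+y-xy=1-(1-x)(1-y)$ and exploit the very clean derivative ${\rm arctanh}_p'(z)=(1-z^p)^{-1}$ coming directly from the integral definition, rather than differentiating the series.

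For the first pair of inequalities I would bypass the hypergeometric machinery and argue by one-variable monotonicity. Fixing $x$ and setting $\Phi(y)={\rm arctanh}_p(x)+{\rm arctanh}_p(y)-{\rm arctanh}_p(w)$, one has $\Phi(0)=0$ and $\Phi'(y)=(1-y^p)^{-1}-(1-x)(1-w^p)^{-1}$, whose nonnegativity is equivalent to $w^p\le x+(1-x)y^p$, i.e.\ to Jensen's inequality $\left(x\cdot1+(1-x)y\right)^p\le x\cdot1^p+(1-x)y^p$ for the convex map $t\mapsto t^p$ ($p\ge1$). This yields superadditivity $D\ge0$, hence $Q\ge1\ge\frac1p$. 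For $Q\le p$ I would take $\Psi(y)=p\,{\rm arctanh}_p(w)-{\rm arctanh}_p(x)-{\rm arctanh}_p(y)$; then $\Psi(0)=(p-1){\rm arctanh}_p(x)\ge0$, and $\Psi'\ge0$ reduces to $1-w^p\le p(1-x)(1-y^p)$, which follows from the classical Bernoulli inequality $(1-s)^p\ge1-ps$ with $s=(1-x)(1-y)$ together with $1-y\le1-y^p$. This is exactly the use of Bernoulli promised in the abstract, and it settles the whole first triple cleanly.

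For the two finer triples -- the quotient bounds with $\frac{p-1}{R(1,1/p)},\frac{2R(1,1/p)}{p-1}$ and the difference bound -- I would follow the Simi\'c--Vuorinen scheme, which hinges on the behaviour of $F$ at its singularity: as $w\to1$, $F\left(a,b;a+b;w\right)=\frac1{B(a,b)}\left(R(a,b)-\log(1-w)\right)+o(1)$, so $R(a,b)$ is precisely the finite part of $g_p(x)/x$ at the endpoint. Combining this asymptotic with the sharp two-sided estimate of Lemma \ref{thm1.52} (equivalently \eqref{ineqartanh}) and extremizing $Q$ and $D$ over $x,y\in(0,1)$ -- the extremal configuration being the boundary limit $x,y\to1$ -- is what should reproduce the stated constants.

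The hard part is that the original argument for $g(z)=zF(a,b;a+b;z)$ relies on the exact linearization $-\log(1-w)=-\log(1-x)-\log(1-y)$, which converts the multiplicative combination into an additive one; for $g_p$ this identity breaks, since $-\log(1-w^p)\ne-\log(1-x^p)-\log(1-y^p)$. The remedy I would use is the one-sided comparison $w^p\ge x^p+y^p-x^py^p$, equivalently $1-w^p\le(1-x^p)(1-y^p)$, i.e.\ the superadditivity of $z\mapsto-\log(1-z^p)$ under $w$, which I would establish by a brief endpoint analysis (the difference vanishes at $x=0$ and $x=1$, and its $x$-derivative changes sign exactly once). Carrying the refined bounds through with this inequality in place of the exact splitting, and thereby recovering the precise constants, is the delicate step; I would in particular expect the constant in the difference bound to need a separate recomputation, which is the likely origin of the $2R-1$ versus $2R+1$ discrepancy between the specialized and the general statements.
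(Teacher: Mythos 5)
Your handling of the first double inequality is complete, correct, and genuinely different from anything in the paper: the paper offers \emph{no} proof of Theorem \ref{thm1} at all, merely asserting that the Simi\'c--Vuorinen argument survives the replacement of $g(z)=zF(a,b;a+b;z)$ by $g_p(z)=zF(a,b;a+b;z^p)$. Your one-variable argument -- fixing $x$, using ${\rm arctanh}_p'(z)=(1-z^p)^{-1}$, reducing $\Phi'\ge0$ to Jensen for $t\mapsto t^p$ and $\Psi'\ge0$ to Bernoulli plus $y^p\le y$ -- checks out line by line, yields the stronger conclusion $Q\ge1$ (hence $Q\ge 1/p$), and delivers the lower bound $D\ge0$ of the third display as a byproduct. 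That part is a real gain over the paper.

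The gap is in the second and third displays. You correctly diagnose the obstruction: Simi\'c and Vuorinen's proof leans on the exact splitting $\log(1-w)=\log(1-x)+\log(1-y)$ for $w=x+y-xy$, which for $g_p$ degrades to the one-sided estimate $1-w^p\le(1-x^p)(1-y^p)$ (true for $p\ge1$; your endpoint/single-sign-change analysis of $x\mapsto w^p-x^p-y^p+x^py^p$ does establish it). But you stop exactly where the work begins: you never verify that each step of the Simi\'c--Vuorinen scheme tolerates an inequality of one fixed direction in place of an identity, nor that the constants $\tfrac{p-1}{R(1,1/p)}$, $\tfrac{2R(1,1/p)}{p-1}$ and $\tfrac{2R(1,1/p)-1}{p}-1$ actually emerge at the end. (Your suggested extremal configuration $x,y\to1$ is also suspect: there $Q\to1$, which lies strictly between the two refined quotient bounds, so the suprema are not attained at that corner.) As written, the second and third displays remain unproven. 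In fairness, the paper leaves precisely this verification implicit -- it ``affirms the theorem without proof'' -- and you at least flagged the $2R+1$ versus $2R-1$ mismatch between the quoted general Simi\'c--Vuorinen bound and the specialized statement, which the paper does not address.
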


The next result gives another lower bounds for the sum ${\rm arctanh}_p(x)+{\rm arctanh}_p(y).$

\begin{theorem} The function $f:(0,\infty) \to (0,\infty),$ defined by
$f(x)={\rm arctanh}_p\left(\frac{1}{\cosh(x)}\right),$ is strictly decreasing and convex for $p\in[1,2].$ Consequently, for all
$x,y\in(0,1)$ and $p\in[1,2]$ we have
\begin{equation}\label{artancosh1}
{\rm arctanh}_p(x)+{\rm arctanh}_p(y)> 2\cdot{\rm arctanh}_p\left(\sqrt{\frac{2xy}{1+xy+x'y'}}\right),
\end{equation}
where $z'=\sqrt{1-z^2}$. Moreover, if $p>0$, then the function $g:(1,\infty)\to(0,\infty),$ defined by
$g(x)={\rm arctanh}_p\left(\frac{1}{x}\right),$ is strictly decreasing and convex. Consequently, for all $x,y\in(0,1)$ and $p>0$ we obtain
\begin{equation}\label{artancosh2}
{\rm arctanh}_p(x)+{\rm arctanh}_p(y)> 2\cdot{\rm arctanh}_p\left(\frac{2xy}{x+y}\right).
\end{equation}
\end{theorem}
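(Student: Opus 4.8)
The plan is to prove in each case the claimed monotonicity and convexity, and then to read off the two inequalities from the two-point Jensen inequality $F(a)+F(b)\ge 2F(\tfrac{a+b}{2})$, valid for a convex $F$ and strict when $F$ is strictly convex and $a\neq b$; the only additional ingredient is a change of variable that recognizes the awkward right-hand sides of \eqref{artancosh1} and \eqref{artancosh2} as values of $f$ or $g$ at a midpoint.

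For $f$ I would write $c=\cosh x$, $s=\sinh x$ and use ${\rm arctanh}_p'(t)=(1-t^p)^{-1}$ together with the chain rule. Monotonicity is immediate, since $x\mapsto 1/\cosh x$ decreases on $(0,\infty)$ while ${\rm arctanh}_p$ increases. A direct differentiation, after eliminating $s$ via $s^2=c^2-1$, gives
$$f'(x)=-\frac{s\,c^{p-2}}{c^p-1},\qquad f''(x)=-\frac{c^{p-3}B(c)}{(c^p-1)^2},\qquad B(c)=-c^{p+2}+2c^p-(p-1)c^2+(p-2),$$
so that convexity of $f$ amounts to $B(c)\le 0$ for $c\ge 1$. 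I expect this inequality to be the main obstacle. I would settle it by noting $B(1)=0$ and showing that $B$ decreases on $[1,\infty)$: writing $B'(c)=c\,P(c)$ with $P(c)=-(p+2)c^p+2p\,c^{p-2}-2(p-1)$, the bounds $c^p\ge 1$, $c^{p-2}\le 1$ and $p-1\ge 0$ (valid for $c\ge1$) give $P(c)\le-(p+2)+2p-2(p-1)=-p<0$. Here $c^{p-2}\le 1$ needs $p\le 2$ and $p-1\ge 0$ needs $p\ge 1$, so the hypothesis $p\in[1,2]$ is used at exactly this step; it then follows that $B(c)<0$ and $f''(x)>0$ for $x>0$.

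To deduce \eqref{artancosh1} I would substitute $x=1/\cosh a$ and $y=1/\cosh b$ with $a,b>0$, so that $f(a)={\rm arctanh}_p(x)$, $f(b)={\rm arctanh}_p(y)$, and $\cosh a=1/x$, $\sinh a=x'/x$, $\cosh b=1/y$, $\sinh b=y'/y$. Convexity yields $f(a)+f(b)\ge 2f(\tfrac{a+b}{2})$, and it remains only to identify $1/\cosh(\tfrac{a+b}{2})$. From $\cosh^2(\tfrac{a+b}{2})=\tfrac12(1+\cosh(a+b))$ and $\cosh(a+b)=\cosh a\cosh b+\sinh a\sinh b=(1+x'y')/(xy)$ I get $\cosh^2(\tfrac{a+b}{2})=(1+xy+x'y')/(2xy)$, hence $1/\cosh(\tfrac{a+b}{2})=\sqrt{2xy/(1+xy+x'y')}$, which is precisely the argument on the right of \eqref{artancosh1}; strict convexity gives the strict inequality.

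The second assertion is lighter. For $g(x)={\rm arctanh}_p(1/x)$ on $(1,\infty)$ one finds $g'(x)=-x^{p-2}/(x^p-1)<0$ and $g''(x)=x^{p-3}(2x^p+p-2)/(x^p-1)^2$, and since $x>1$ forces $2x^p+p-2>p>0$, one has $g''>0$ for every $p>0$; thus $g$ is decreasing and convex without restriction on $p$. To obtain \eqref{artancosh2} I would put $s=1/x$, $t=1/y\in(1,\infty)$, so $g(s)={\rm arctanh}_p(x)$, $g(t)={\rm arctanh}_p(y)$, and midpoint convexity gives $g(s)+g(t)\ge 2g(\tfrac{s+t}{2})$; since $\tfrac{s+t}{2}=\tfrac{x+y}{2xy}$ has reciprocal $\tfrac{2xy}{x+y}$, this is \eqref{artancosh2}. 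Finally I would remark that in both reductions the arguments of ${\rm arctanh}_p$ stay in $(0,1)$---the relevant means of $x,y\in(0,1)$ being below $1$---so that everything is well defined.
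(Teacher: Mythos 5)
Your proposal is correct and follows essentially the same route as the paper: establish that $f$ and $g$ are decreasing and convex, apply the two-point Jensen inequality, and identify the midpoint arguments via $\cosh^2\left(\tfrac{r+s}{2}\right)=\tfrac{1+xy+x'y'}{2xy}$ and $\tfrac{2}{r+s}=\tfrac{2xy}{x+y}$. The only (immaterial) difference is in verifying convexity of $f$: you compute $f''$ directly and show $B(c)\le 0$ from $B(1)=0$ and $B'\le 0$, whereas the paper writes $f'=-\,\mathrm{sech}(x)\cdot\tanh(x)/(1-\mathrm{sech}^p(x))$ and shows the second factor is decreasing using $\cosh^p(x)\le\cosh^2(x)$ for $p\le 2$; both hinge on the same use of the hypothesis $p\in[1,2]$.
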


\begin{proof}[\bf Proof] Simple calculation gives for all $x>0$
$$f'(x)=-\frac{{\rm sech}(x)\tanh(x)}{1-{\rm sech}(x)^p}<0.$$
Clearly ${\rm sech}$ is decreasing, and thus for the convexity of $f$ it is enough to prove that the function $g:(0,\infty)\to(0,\infty),$ defined by
$g(x)=\tanh(x)/(1-{\rm sech}(x)^p),$ is decreasing. Differentiating with respect to $x$ we get for $p\in[1,2]$ and $x>0$
\begin{align*}
g'(x)&=-\frac{{\rm sech}^{p+2}(x)}{(1-{\rm sech}^p(x))^2}(1+p\cosh^2(x)-p-\cosh^p(x))\\
&<-\frac{{\rm sech}^{p+2}(x)}{(1-{\rm sech}^p(x))^2}(1+p\cosh^2(x)-p-\cosh^2(x))\\
&<-\frac{{\rm sech}^{p+2}(x)}{(1-{\rm sech}^p(x))^2}((p-1)(\cosh^2(x)-1))<0.
\end{align*}
Hence $g$ is decreasing, and this in turn implies that $f'$ is increasing for $p\in[1,2]$. This implies that $f$ is convex if $p\in[1,2]$.
Hence for all $r,s>0$ and $p\in[1,2]$ we get
\begin{equation}\label{artancosh}
\frac{1}{2}\left({\rm arctanh}_p\left(\frac{1}{\cosh(r)}\right)+{\rm arctanh}_p\left(\frac{1}{\cosh(s)}\right)\right)>{\rm arctanh}_p\left(\frac{1}{\cosh\left(\frac{r+s}{2}\right)}\right).
\end{equation}
Setting $x=1/\cosh(r),$ $y=1/\cosh(s)$ and using the identity
$$\cosh^2\left(\frac{r+s}{2}\right)=\frac{1+xy+x'y'}{2xy}$$ and inequality \eqref{artancosh}, we get the required inequality \eqref{artancosh1}.

Since for all $x>1$ and $p>0$ we have $$g'(x)=\frac{x^{p-2}}{1-x^p}<0 \ \ \ \mbox{and}\ \ \ g''(x)=\frac{(p-2+2x^p)x^{p-3}}{(1-x^p)^2}>0,$$
we get that indeed $g$ is strictly decreasing and convex for all $p>0.$ Consequently, for all $r,s>1$ and $p>0$ we obtain the following inequality
$$2\cdot {\rm arctanh}_p\left(\frac{2}{r+s}\right)< {\rm arctanh}_p\left(\frac{1}{r}\right)
+{\rm arctanh}_p\left(\frac{1}{s}\right),$$
which is equivalent to \eqref{artancosh2}.
\end{proof}

Now, we focus on Gr\"unbaum type inequalities for generalized inverse trigonometric functions.

\begin{theorem}
Let $x,y,z\in(0,1)$ be such that $z^2=x^2+y^2.$ If $p\geq1,$ then the following Gr\"unbaum type inequalities are true
$$1+\frac{{\rm arcsin}_p(z^2)}{z^2}\geq\frac{{\rm arcsin}_p(x^2)}{x^2}+\frac{{\rm arcsin}_p(y^2)}{y^2},$$
$$1+\frac{{\rm arctanh}_p(z^2)}{z^2}\geq\frac{{\rm arctanh}_p(x^2)}{x^2}+\frac{{\rm arctanh}_p(y^2)}{y^2}.$$
Moreover, if $p\geq 2,$ then we have
$$1+\frac{{\rm arctan}_p(z^2)}{z^2}\leq\frac{{\rm arctan}_p(x^2)}{x^2}+\frac{{\rm arctan}_p(y^2)}{y^2},$$
$$1+\frac{{\rm arcsinh}_p(z^2)}{z^2}\leq\frac{{\rm arcsinh}_p(x^2)}{x^2}+\frac{{\rm arcsinh}_p(y^2)}{y^2},$$
and the last inequality is reversed when $p\in(0,1].$
\end{theorem}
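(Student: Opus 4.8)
The plan is to reduce each of the four Grünbaum-type inequalities to a direct application of Lemma \ref{lem00}. That lemma asserts that if $g(x)=\frac{1}{x}\left[f(x)-1\right]$ is increasing on $(a,\infty)$, then the function $h(x)=f(x^2)$ satisfies $1+h(z)\geq h(x)+h(y)$ whenever $z^2=x^2+y^2$, with the inequality reversed when $g$ is decreasing. Thus the whole theorem becomes a matter of identifying the correct $f$ in each case and determining the monotonicity of the associated $g$. For the $\arcsin_p$ inequality I would set $f(x)=\frac{1}{x}\,{\rm arcsin}_p(x)$, so that $h(x)=f(x^2)=\frac{{\rm arcsin}_p(x^2)}{x^2}$ matches the left side exactly; the analogous choices $f(x)=\frac{1}{x}\,{\rm arctanh}_p(x)$, $f(x)=\frac{1}{x}\,{\rm arctan}_p(x)$, and $f(x)=\frac{1}{x}\,{\rm arcsinh}_p(x)$ handle the remaining three statements.

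With $f$ so chosen, the quantity $g(x)=\frac{1}{x}\left[f(x)-1\right]$ is governed by the hypergeometric representations of Lemma \ref{hypform}. For instance, using ${\rm arcsin}_p(x)=x\,F\!\left(\frac1p,\frac1p;1+\frac1p;x^p\right)$ we get $f(x)=F\!\left(\frac1p,\frac1p;1+\frac1p;x^p\right)=\sum_{n\geq0}\frac{(\frac1p,n)^2}{(1+\frac1p,n)\,n!}x^{pn}$, so that
\begin{equation}\label{gseries}
g(x)=\frac{f(x)-1}{x}=\sum_{n\geq1}\frac{(\frac1p,n)^2}{(1+\frac1p,n)\,n!}\,x^{pn-1}.
\end{equation}
Since every coefficient in this series is positive and $p\geq1$ forces each exponent $pn-1\geq n-1\geq0$ to be nondecreasing and the terms themselves increasing on $(0,\infty)$, the function $g$ is increasing, which yields the first inequality. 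The $\arctanh_p$ case is identical in structure, replacing the coefficient $(\frac1p,n)^2$ by $(1,n)(\frac1p,n)$, again all positive, so $g$ is increasing for $p\geq1$. For $\arctan_p$ and $\arcsinh_p$ the representations involve the argument $-x^p$, producing an alternating series; here the relevant sign flips, and one expects $g$ to be decreasing (hence the reversed, ``$\leq$'' direction) precisely when the alternation is controlled, which is where the threshold $p\geq2$ enters.

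The main obstacle will be establishing the monotonicity of $g$ in the two alternating cases, $\arctan_p$ and $\arcsinh_p$, and pinning down the exact parameter thresholds $p\geq2$ versus $p\in(0,1]$. For these I would write $g$ as a power series in $x^p$ with coefficients of alternating sign and analyze $g'(x)$ termwise: the sign of $g'$ on $(0,1)$ is not immediate from the series because competing positive and negative contributions must be balanced. A cleaner route is to differentiate $f$ directly and express the monotonicity condition of $g$ as a single-variable inequality in $t=x^p\in(0,1)$, then show this inequality changes character exactly at $p=2$ (respectively $p=1$). I expect that for $\arcsinh_p$ the condition $g'\leq0$ holds for all $p\geq2$ and reverses to $g'\geq0$ for $p\in(0,1]$, matching the stated dichotomy, while the intermediate range $p\in(1,2)$ is deliberately excluded because neither monotonicity persists throughout $(0,1)$.

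Finally, once the monotonicity of each $g$ is secured, the four conclusions follow immediately by invoking Lemma \ref{lem00} with $a=0$ and reading off $h(x)=f(x^2)$; the increasing cases give the ``$\geq$'' inequalities for $\arcsin_p$ and $\arctanh_p$, and the decreasing cases give the ``$\leq$'' inequalities for $\arctan_p$ and $\arcsinh_p$ (with the reversal for $p\in(0,1]$ in the last). No additional estimates beyond the series positivity in \eqref{gseries} and the sign analysis of the alternating derivatives are needed, so the proof is structurally short once the threshold computation is carried out.
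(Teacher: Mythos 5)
Your framework is exactly the paper's: apply Lemma \ref{lem00} with $f(x)=\frac{1}{x}\,{\rm arcsin}_p(x)$ etc., and for the $\arcsin_p$ and $\arctanh_p$ cases the series argument you give (positive coefficients, exponents $pn-1\geq 0$ when $p\geq 1$) is precisely how the paper proves the first two inequalities. Those two cases are complete.

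However, for the $\arctan_p$ and $\arcsinh_p$ cases your proposal stops at ``I expect that\ldots'': you correctly diagnose that the alternating series makes a termwise argument inconclusive and propose to reduce the monotonicity of $g$ to a single-variable inequality in $t=x^p$, but you never carry out that reduction or verify the thresholds $p\geq 2$ and $p\in(0,1]$. This is the entire substance of the second half of the theorem, so as written there is a genuine gap. The paper closes it as follows: writing $u(x)=\frac{1}{x}\bigl[\frac{{\rm arctan}_p(x)}{x}-1\bigr]$, one computes $u'(x)=-v(x)/x^3$ where $v(x)=2\,{\rm arctan}_p(x)-x\bigl(1+\frac{1}{1+x^p}\bigr)$, and then $v'(x)=\frac{x^p(p-1-x^p)}{(1+x^p)^2}>0$ for $p\geq 2$ and $x\in(0,1)$, so $v>v(0^+)=0$ and $u$ is decreasing. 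For $\arcsinh_p$ the analogous auxiliary function is $q(x)=2\,{\rm arcsinh}_p(x)-x\bigl(1+(1+x^p)^{-1/p}\bigr)$, whose derivative has numerator $1+2x^p-(1+x^p)^{1+\frac1p}$; the Bernoulli inequality $(1+y)^a<1+ay$ for $a\in(0,1)$ shows this is positive when $p\geq 2$, and the reversed Bernoulli inequality $(1+y)^a>1+ay$ for $a>1$ shows it is negative when $p\in(0,1]$, which yields the stated dichotomy. Without these (or equivalent) computations, the sign of $g'$ in the alternating cases remains unproved, and the claimed thresholds are only conjectured. A minor additional caution: the theorem asserts the inequality for $p\geq 2$ and its reversal for $p\in(0,1]$; nothing needs to ``change character exactly at $p=2$,'' and the paper makes no sharpness claim for the intermediate range $p\in(1,2)$.
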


\begin{proof}[\bf Proof]
We shall apply Lemma \ref{lem00}. Since
$$f(x)=\frac{1}{x}\left[\frac{{\rm arcsin}_p(x)}{x}-1\right]=\frac{F\left(\frac{1}{p},\frac{1}{p};1+\frac{1}{p};x^p\right)-1}{x}=
\sum_{n\geq1}\frac{\frac{1}{p}\left(\frac{1}{p},n\right)}{\frac{1}{p}+n}\frac{x^{pn-1}}{n!},$$
$$g(x)=\frac{1}{x}\left[\frac{{\rm arctanh}_p(x)}{x}-1\right]=\frac{F\left(1,\frac{1}{p};1+\frac{1}{p};x^p\right)-1}{x}=
\sum_{n\geq1}\frac{\frac{1}{p}}{\frac{1}{p}+n}{x^{pn-1}},$$
it is clear that the functions $f$ and $g$ are increasing on $(0,1)$ for all $p\geq1.$ Applying Lemma \ref{lem00} these imply the first two inequalities of the theorem.

Now, consider the functions $u,v,w,q:(0,1)\to \mathbb{R},$ defined by
$$u(x)=\frac{1}{x}\left[\frac{{\rm arctan}_p(x)}{x}-1\right],\ \ \  v(x)=2\cdot{\rm arctan}_p(x)-x\left(1+\frac{1}{1+x^p}\right),$$
$$w(x)=\frac{1}{x}\left[\frac{{\rm arcsinh}_p(x)}{x}-1\right],\ \ \  q(x)=2\cdot{\rm arcsinh}_p(x)-x\left(1+\frac{1}{(1+x^p)^{\frac{1}{p}}}\right).$$
Since $$u'(x)=-\frac{v(x)}{x^3}, \ v'(x)=\frac{x^p(p-1-x^p)}{(1+x^p)^2},\ w'(x)=-\frac{q(x)}{x^3},\ q'(x)=\frac{1+2x^p-(1+x^p)^{\frac{1}{p}+1}}{(1+x^p)^{\frac{1}{p}+1}},$$
it follows that $v$ is increasing for $p\geq 2$ and thus $v(x)>\lim_{x\to 0}v(x)=0,$ that is, we have
$$2\cdot{\rm arctan}_p(x)>x\left(1+\frac{1}{1+x^p}\right)$$
for all $p\geq 2$ and $x\in(0,1).$ Consequently $u$ is decreasing for $p\geq 2$ and applying Lemma \ref{lem00} the third Gr\"unbaum inequality of this theorem follows. Finally, by applying the Bernoulli inequality \cite[p. 34]{mitri} $(1+y)^a<1+ay,$ where $y>-1,$ $y\neq0$ and $a\in(0,1),$ it follows that
$$1+2x^p-(1+x^p)(1+x^p)^{\frac{1}{p}}>1+2x^p-(1+x^p)\left(1+\frac{x^p}{p}\right)=\left(1-\frac{1}{p}-\frac{x^p}{p}\right)x^p>0$$
for $x\in(0,1)$ and $p\geq2.$ Consequently, $q$ is increasing and hence $q(x)>\lim_{x\to 0}q(x)=0$, that is, we have for $x\in(0,1)$ and $p\geq 2$
$$2\cdot{\rm arcsinh}_p(x)>x\left(1+\frac{1}{(1+x^p)^{\frac{1}{p}}}\right).$$
Hence $w$ is decreasing for $p\geq 2$ and applying Lemma \ref{lem00} the last Gr\"unbaum inequality of this theorem follows. Moreover, by using the corresponding Bernoulli inequality \cite[p. 34]{mitri} $(1+y)^a>1+ay,$ where $y>-1,$ $y\neq0$ and $a>1,$ it follows that
$$1+2x^p-(1+x^p)^{\frac{1}{p}+1}<1+2x^p-\left(1+\left(\frac{1}{p}+1\right)x^p\right)=\left(1-\frac{1}{p}\right)x^p\leq0$$
for $x\in(0,1)$ and $p\in(0,1].$ Consequently, $q$ is decreasing and hence $q(x)<\lim_{x\to 0}q(x)=0$. Hence $w$ is increasing for $p\in(0,1]$ and applying Lemma \ref{lem00} we obtain that the last Gr\"unbaum inequality of this theorem is reversed.
\end{proof}

Finally, our aim is to present some new lower and upper bounds for the functions ${\rm arctanh}_p$ and ${\rm arctan}_p.$

\begin{theorem}\label{tibthm} For all $p>1$, $x\in (0,1)$ there holds
   \begin{align} \label{I1}
      {\rm arctanh}_p(x) &< \frac x2 \left( 1 - \frac2p\, \log\big(1-x^{\frac p2}\big)
         + \frac{2^{\frac2p}\,b_{\frac p2}}{(1+x^{\frac p2})^{\frac2p}}\right)\\ \label{I2}
      {\rm arctan}_p(x) &< x\left( 1- \frac1{p(1+p)}\, \log(1-x^p)
         - \frac1p\,\log (1+x^p) \right) =: R_p(x)\, ,
   \end{align}
where
   \[ b_s := \frac1{2s}\left\{ \psi\left( \frac{1+s}{2s}\right)
           - \psi\left(\frac1{2s}\right)\right\}\, .\]
Moreover, we have
   \begin{equation} \label{I3}
      {\rm arctanh}_p(x) > \frac x2 \left( 1 - \frac2{2+p}\, \log\big(1-x^{\frac p2}\big)
                        + \frac{p (2+p)(1+x^{\frac p2}) + 4x^{\frac p2}}
                          {p (2+p)(1+x^{\frac p2})^{1+\frac2p}}
                          \right)\, ,
   \end{equation}
and
   \begin{equation} \label{I4}
      {\rm arctan}_p(x) > x \left( 1 + \frac1{p(1+p)}\, \log\big(1-x^p\big)
                        - \frac2{1+2p}\, \log(1+x^p)\right) =: L_p(x).
   \end{equation}
\end{theorem}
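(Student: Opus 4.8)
The whole theorem rests on reducing $\mathrm{arctanh}_p$ and $\mathrm{arctan}_p$ to the \emph{same} functions evaluated at a rescaled index, through two elementary partial-fraction identities. Splitting $1-t^p=(1-t^{p/2})(1+t^{p/2})$ gives
\[
\frac{1}{1-t^p}=\frac12\left(\frac{1}{1-t^{p/2}}+\frac{1}{1+t^{p/2}}\right),
\]
and integrating over $(0,x)$ yields the identity ${\rm arctanh}_p(x)=\tfrac12\big({\rm arctanh}_{p/2}(x)+{\rm arctan}_{p/2}(x)\big)$. Likewise, from $\tfrac1{1+t^p}=\tfrac{2}{1-t^{2p}}-\tfrac1{1-t^p}$ one obtains, after integration,
\[
{\rm arctan}_p(x)=2\,{\rm arctanh}_{2p}(x)-{\rm arctanh}_p(x).
\]
Once these are in hand, every bound becomes a matter of inserting the already-established estimates \eqref{ineqartanh} and \eqref{oldineq} with $p$ replaced by $p/2$ or $2p$, so I would establish them first.

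For \eqref{I1} and \eqref{I3} I would feed the first identity with the two-sided bound \eqref{ineqartanh} and the arctan bounds \eqref{oldineq} both taken at index $p/2$. On the upper side, the estimate ${\rm arctanh}_{p/2}(x)<x(1-\tfrac2p\log(1-x^{p/2}))$ together with ${\rm arctan}_{p/2}(x)<M_{p/2}(x)=2^{2/p}b_{p/2}\,x(1+x^{p/2})^{-2/p}$, averaged by $\tfrac12$, reproduces the bracket of \eqref{I1} verbatim. On the lower side, the companion estimates ${\rm arctanh}_{p/2}(x)>x(1-\tfrac{2}{2+p}\log(1-x^{p/2}))$ and ${\rm arctan}_{p/2}(x)>m_{p/2}(x)$ give \eqref{I3} after clearing the factor $\tfrac{p}{2}(1+\tfrac{p}{2})=\tfrac{p(2+p)}{4}$ present in $m_{p/2}$.

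For \eqref{I2} and \eqref{I4} I would use the second identity with \eqref{ineqartanh} applied at both $2p$ and $p$. Taking the upper estimate of ${\rm arctanh}_{2p}$ and the lower estimate of ${\rm arctanh}_p$ yields
\[
{\rm arctan}_p(x)<x\left(1-\tfrac1p\log(1-x^{2p})+\tfrac1{1+p}\log(1-x^p)\right),
\]
and the factorisation $\log(1-x^{2p})=\log(1-x^p)+\log(1+x^p)$ collapses the $\log(1-x^p)$ terms with coefficient $-\tfrac1p+\tfrac1{1+p}=-\tfrac1{p(1+p)}$, giving exactly $R_p(x)$. Interchanging the two roles (lower estimate for ${\rm arctanh}_{2p}$, upper for ${\rm arctanh}_p$) and splitting the logarithm the same way produces a lower bound of the shape of $L_p(x)$; since $\log(1-x^p)<0$ on $(0,1)$, the $\log(1-x^p)$ coefficient may be relaxed to the cleaner value displayed in \eqref{I4}.

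The only genuinely creative step is spotting the two decomposition identities; after that the argument is substitution and bookkeeping. The points requiring care are the \emph{direction} of each estimate in the subtractive identity for ${\rm arctan}_p$, where an upper bound for the whole forces a \emph{lower} bound on the subtracted ${\rm arctanh}_p$ term and conversely, and the parameter range: since \eqref{I1} and \eqref{I3} invoke \eqref{oldineq} at index $p/2$, one must confirm that the arctan bounds there remain valid across the claimed range $p>1$, whereas \eqref{I2} and \eqref{I4} use only \eqref{ineqartanh} and so hold for all $p>0$. Matching the constants $2^{2/p}b_{p/2}$, $M_{p/2}$ and $m_{p/2}$ exactly is routine but is where sign and power errors would most easily creep in.
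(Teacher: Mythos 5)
Your proposal is correct and follows essentially the same route as the paper: both rest on the partial-fraction identity $2\,{\rm arctanh}_{2p}(x)={\rm arctanh}_p(x)+{\rm arctan}_p(x)$ (used once with $2p\mapsto p$ and once as written), into which the known bilateral bounds \eqref{ineqartanh} and \eqref{oldineq} are substituted, with the sign-reversal for the subtracted term handled exactly as you describe. Your observation that the derived coefficient $\tfrac1{p(1+2p)}$ in the lower bound must be relaxed to $\tfrac1{p(1+p)}$ via $\log(1-x^p)<0$ is a correct and necessary detail that the paper glosses over.
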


\begin{proof}[\bf Proof]
Since
   \[  \frac2{1-t^{2p}} = \frac1{1-t^p} + \frac1{1+t^p}\, ,\]
integrating between $0$ and $x$, we get
   \begin{equation} \label{I5}
      2\,{\rm arctanh}_{2p}(x) = {\rm arctanh}_p(x) + {\arctan}_p(x)\, .
   \end{equation}
Being \cite[Theorem 1.2]{bvarxiv}
   \begin{equation} \label{I6}
      x\left(1 - \frac1{1+p}\, \log(1-x^p)\right) < {\rm arctanh}_p(x) < x\left(1-\frac1p\,
                    \log\left(1-x^p\right)\right)\, ,
   \end{equation}
and simultaneously \cite[Theorem 1.1]{bvarxiv}
   \begin{equation} \label{I7}
      \frac{(p(1+p)(1+x^p)+x^p)x}{p(1+p)(1+x^p)^{1+\frac1p}} <
               {\arctan}_p(x) < \frac{2^pb_p\, x}{(1+x^p)^{\frac1p}}\, ,
   \end{equation}
taking throughout $2p \mapsto p$, the rest is straightforward. Also, the lower bound
in \eqref{I3} follows by making use of both lower bounds in the previous bilateral  inequalities.

The upper bound \eqref{I2} we conclude by
   \[ {\arctan}_p(x) = 2\,{\rm arctanh}_{2p}(x) -{\rm arctanh}_p(x) \, ,\]
since the upper bound for the inverse hyperbolic and the lower bound for the inverse trigonometric functions yield the statement. The associated lower bound \eqref{I4} we achieve
by applying the lower bound for ${\rm arctanh}_{2p}$ and the upper bound for
${\rm arctanh}_{p}$ in \eqref{I6} and \eqref{I7}, respectively.

Now it remains to prove that the inequality pairs \eqref{I1} \& \eqref{I3} and
\eqref{I2} \& \eqref{I4} are not redundant. For instance the difference
of the upper bound in \eqref{I2} and the lower bound in \eqref{I4} has to be positive for all
$p>1$ for certain fixed $x \in (0,1)$. Indeed, being
   \begin{align*}
      R_p(x) - L_p(x) &= -\frac2{p(1+p)}\, \log(1-x^p) - \frac1{p(1+2p)}\, \log(1+x^p) \\
             &> \frac2{p(1+p)}\,x^p - \frac1{p(1+2p)}\,x^p = \frac{x^p}{(1+p)(1+2p)}>0\, .
   \end{align*}
The proof is complete.
\end{proof}

%%%%%%%%%%%%%%%%%%%%%%%%%%%%%%%%%%%
%%%%%%%%%%%%%%%%% Section 3 %%%%%%%%%%
%%%%%%%%%%%%%%%%%%%%%%%%%%%%%%%%%%%%%%%

\section{\bf Bilateral inequalities for generalized hypergeometric ${}_3F_2$ function}
\setcounter{equation}{0}

In the sequel we deduce some lower and upper bounds for two different kind of generalized hypergeometric ${}_3F_2$ (or sometimes called Clausen's) functions, by bounding {\em mutatis mutandis} both sum and difference $\arcsin_p(x)\pm {\rm arcsinh}_p(x)$, by already known  bilateral bounds. The generalized hypergeometric  ${}_3F_2$ function, is defined by the power series
   \[ {}_3F_2\left( a,\,b,\,c ;\, d,\,e ; z \right)
           = \sum_{n \geq 0} \frac{(a,n)(b,n)(c,n)}{(d,n)(e,n)}\, \frac{z^n}{n!}\, ,\]
and converges for all $|z|<1$.

The first main result of this section reads as follows.

\begin{theorem}\label{th3F2} For all $a \in (0, \tfrac12)$ and $x \in (0,1)$ we have
   \begin{equation} \label{J1}
      L_a(x) < {}_3F_2\left( a, a, a+\tfrac12; \tfrac12, a+1 ; x^{\frac1a} \right) < R_a(x),
   \end{equation}
where
   \begin{align*}
      L_a(x) &= \frac12\,\left(1 + \frac{4a^2}{1+2a}\,x^{\frac1{2a}}\right)
           + \frac{1+\frac{2a}{1+2a}\log(1+x^{\frac1{2a}})}{2(1+x^{\frac1{2a}})^{2a}}\\
      R_a(x) &= \frac{\pi_{\frac1{2a}}}4
              + \frac{1+2a \log(1+x^{\frac1{2a}})}{2(1+x^{\frac1{2a}})^{2a}} ,
   \end{align*}
and
   \[ \pi_p = \frac{2\pi}{p\, \sin(\pi/p)}\, .\]
\end{theorem}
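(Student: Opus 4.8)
The plan is to identify the Clausen ${}_3F_2$ with a normalized sum ${\rm arcsin}_p(x)+{\rm arcsinh}_p(x)$ and then sandwich that sum between the bilateral bounds already recorded in the excerpt. First I would set $p=\tfrac1{2a}$, so that $a\in(0,\tfrac12)$ becomes $p>1$ — precisely the range in which \eqref{ineqarsin} and \eqref{oldineq} hold. By Lemma \ref{hypform},
\[
   {\rm arcsin}_p(x)+{\rm arcsinh}_p(x)
      = x\Bigl[F\bigl(\tfrac1p,\tfrac1p;1+\tfrac1p;x^p\bigr)
              +F\bigl(\tfrac1p,\tfrac1p;1+\tfrac1p;-x^p\bigr)\Bigr].
\]
In the termwise sum $F(z)+F(-z)$ the odd-indexed terms cancel, so writing $n=2m$ gives
\[
   {\rm arcsin}_p(x)+{\rm arcsinh}_p(x)
      = 2x\sum_{m\ge0}\frac{\bigl(\tfrac1p,2m\bigr)^2}{\bigl(1+\tfrac1p,2m\bigr)}\,
        \frac{x^{2pm}}{(2m)!}.
\]

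The crucial step is to recast this even series as a ${}_3F_2$. Applying the Pochhammer duplication formulas $(\alpha,2m)=4^m\bigl(\tfrac{\alpha}2,m\bigr)\bigl(\tfrac{\alpha+1}2,m\bigr)$ and $(2m)!=4^m\bigl(\tfrac12,m\bigr)\,m!$ — with $\alpha=\tfrac1p$ in the numerator and $\alpha=1+\tfrac1p$ in the denominator — all powers of $4$ cancel and the $m$-th term collapses to $\tfrac{(\frac1{2p},m)(\frac1{2p},m)(\frac{1+p}{2p},m)}{(\frac12,m)(1+\frac1{2p},m)}\tfrac{(x^{2p})^m}{m!}$. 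Since $\tfrac1{2p}=a$, $\tfrac{1+p}{2p}=a+\tfrac12$, $1+\tfrac1{2p}=a+1$ and $x^{2p}=x^{1/a}\in(0,1)$, this is exactly the $m$-th term of the (convergent) series for ${}_3F_2\bigl(a,a,a+\tfrac12;\tfrac12,a+1;x^{1/a}\bigr)$, whence
\[
   {}_3F_2\Bigl(a,a,a+\tfrac12;\tfrac12,a+1;x^{1/a}\Bigr)
      = \frac{{\rm arcsin}_p(x)+{\rm arcsinh}_p(x)}{2x}.
\]

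It then remains to bound the two summands and divide by $2x$. For ${\rm arcsinh}_p$ I would use $t_p(x)<{\rm arcsinh}_p(x)<T_p(x)$ from \eqref{oldineq}; after the substitution $2a=\tfrac1p$ the quotients $t_p(x)/(2x)$ and $T_p(x)/(2x)$ are exactly the second summands of $L_a$ and $R_a$. For ${\rm arcsin}_p$ the lower estimate is \eqref{ineqarsin}, which gives the first summand of $L_a$ because $\tfrac{4a^2}{1+2a}=\tfrac1{p(1+p)}$ and $\tfrac{2a}{1+2a}=\tfrac1{1+p}$; the upper estimate follows since $\tfrac{{\rm arcsin}_p(x)}x=F\bigl(\tfrac1p,\tfrac1p;1+\tfrac1p;x^p\bigr)$ is increasing on $(0,1)$ and, by Gauss' summation theorem together with the reflection formula, tends to $F\bigl(\tfrac1p,\tfrac1p;1+\tfrac1p;1\bigr)=\Gamma\bigl(1+\tfrac1p\bigr)\Gamma\bigl(1-\tfrac1p\bigr)=\tfrac{\pi_p}2$, so that ${\rm arcsin}_p(x)<\tfrac{\pi_p}2\,x$, contributing the constant $\tfrac{\pi_p}4$ in $R_a$. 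Adding the paired estimates and dividing by $2x$ delivers $L_a(x)<{}_3F_2<R_a(x)$.

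The main obstacle is the quadratic-transformation step: one must check carefully that the duplication identities produce precisely the five target parameters — in particular that the factor $\bigl(\tfrac{1+p}{2p},m\bigr)^2$ arising in the numerator cancels one copy against the denominator, leaving a single $\bigl(\tfrac{1+p}{2p},m\bigr)=(a+\tfrac12,m)$. Everything after the identity is routine substitution of $a=\tfrac1{2p}$ into the already-established bilateral bounds, so the only genuine computation is this combinatorial reduction.
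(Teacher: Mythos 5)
Your proposal is correct and follows essentially the same route as the paper: both reduce the sum $\arcsin_p(x)+{\rm arcsinh}_p(x)$ to $2x\,{}_3F_2(a,a,a+\tfrac12;\tfrac12,a+1;x^{1/a})$ via cancellation of odd terms and the Legendre/Pochhammer duplication identities, and then apply the bilateral bounds \eqref{J4}--\eqref{J5} with $p=\tfrac1{2a}$. The only cosmetic difference is that you re-derive the upper bound $\arcsin_p(x)<\tfrac{\pi_p}{2}x$ from Gauss's summation theorem, whereas the paper simply cites it from the literature.
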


\begin{proof}[\bf Proof]
Consider the sum
   \[ \arcsin_p(x) + {\rm arcsinh}_p(x) = x\left[ F\left(\frac{1}{p},\frac{1}{p};
             1+\frac{1}{p};x^p\right) + F\left(\frac{1}{p},\frac{1}{p};
             1+\frac{1}{p};-x^p\right)\right]\, .\]
Summing up the hypergeometric functions, after some routine calculations we get
   \begin{equation} \label{J3}
      \arcsin_p(x) + {\rm arcsinh}_p(x) = 2x\,{}_3F_2\left(\frac1{2p},\,
             \frac1{2p},\,\frac1{2p}+\frac12; \frac12,\,\frac1{2p}+1; x^{2p} \right)\, .
   \end{equation}
Indeed, starting with
   \begin{align*}
      F\left(\frac1p, \frac1p; 1+\frac{1}{p};x^p \right)
        &+ F\left(\frac{1}{p},\frac{1}{p}; 1+\frac{1}{p};-x^p \right) \\
        &\hspace{-10mm} = \sum_{n \geq 0}
           (1+(-1)^n)\,\frac{(\frac1p,n)\,(\frac1p,n)}{(\frac1p+1,n)}\,
           \frac{(x^p)^n}{n!}
         = 2 \sum_{n \geq 0} \frac{(\frac1p,2n)\,(\frac1p,2n)}{(\frac1p+1,2n)}\,
           \frac{(x^p)^{2n}}{(2n)!} := H_1\, ,
   \end{align*}
and following the appropriate form of the Legendre duplication formula, which reads
   \[ \big(2s\big)_{2n} = 4^n\,\big(s,n\big)\,\big(s+\tfrac12,n\big)\,, \]
we deduce the desired statement
   \begin{align*}
      H_1 &= 2 \sum_{n \geq 0} \frac{(\frac1{2p},n)^2\,(\frac1{2p}+\frac12,n)^2}
             {(\frac1{2p}+\tfrac12,n)\,(\frac1{2p}+1,n)\,
             (\frac12,n)}\,\frac{(x^{2p})^n}{(1,n)}\\
          &= 2\cdot {}_3F_2\left(\frac1{2p},\, \frac1{2p},\,\frac1{2p}+\frac12;
             \frac12,\,\frac1{2p}+1; x^{2p} \right) .
   \end{align*}
Now, having in mind that \cite[Theorem 1.1]{bvarxiv}
   \begin{equation} \label{J4}
      \left(1 + \frac{x^p}{p(1+p)}\right)\,x < \arcsin_p(x) < \frac{\pi_p}2\,x\, ,
   \end{equation}
and respectively \cite[Theorem 1.2]{bvarxiv}
   \begin{equation} \label{J5}
      \frac{x(1+\frac1{1+p}\log(1+x^p))}{(1+x^p)^{\frac1p}} < {\rm arcsinh}_p(x) <
      \frac{x(1+\frac1p \log(1+x^p))}{(1+x^p)^{\frac1p}} ,
   \end{equation}
we deduce {\em via} \eqref{J3} the upper bound
   \[ {}_3F_2\left( \begin{array}{c} \frac1{2p},\,\frac1{2p},\,\frac1{2p}+\frac12 \\
             \frac12,\,\frac1{2p}+1 \end{array}; x^{2p} \right) <
      \frac{\pi_p}4 + \frac{1+\frac1p \log(1+x^p)}{2(1+x^p)^{\frac1p}};\]
while the related lower bound becomes
   \[ {}_3F_2\left( \begin{array}{c} \frac1{2p},\,\frac1{2p},\,\frac1{2p}+\frac12 \\
             \frac12,\,\frac1{2p}+1 \end{array}; x^{2p} \right) >
             \frac12\,\left(1 + \frac{x^p}{p(1+p)}\right)
           + \frac{1+\frac1{1+p}\log(1+x^p)}{2(1+x^p)^{\frac1p}}\, .\]
\end{proof}

%%%%%%%%%%%%%%%%%%%%%%%%%%%%Theorem 8%%%%%%%%%%%%%%%%%%%%%%%%%%
The next result is the analogous of Theorem \ref{th3F2}.

\begin{theorem} For all $a \in (0, \tfrac12)$ and $x \in (0,1)$ we have
   \begin{equation} \label{J6}
      \widetilde L_a(x) < {}_3F_2\left( a+\tfrac12, a+\tfrac12, a+1; \tfrac32, a+\tfrac32 ;
                          x^{\frac1a} \right) < \widetilde R_a(x),
   \end{equation}
where
   \begin{align*}
      \widetilde L_a(x) &= \frac12 + \frac{1+2a}{8a^2x^{\frac1{2a}}}\,\left( 1
           - \frac{1+2a \log(1+x^{\frac1{2a}})}{(1+x^{\frac1{2a}})^{2a}} \right)\, , \\
      \widetilde R_a(x) &= \frac{1+2a}{8a^2x^{\frac1{2a}}}\,
             \left(\frac{\pi_{\frac1{2a}}}2
           - \frac{1+\frac{2a}{1+2a}\log(1+x^{\frac1{2a}})}{(1+x^{\frac1{2a}})^{2a}}\right).
   \end{align*}
\end{theorem}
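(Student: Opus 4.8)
The plan is to mirror the proof of the preceding theorem, but now to extract the \emph{difference} $\arcsin_p(x)-{\rm arcsinh}_p(x)$ in place of the sum, and to identify the parameter through $p=\tfrac1{2a}$. With this identification the hypothesis $a\in(0,\tfrac12)$ corresponds exactly to $p>1$, which is precisely the range in which the bilateral bounds \eqref{J4} for $\arcsin_p$ and \eqref{J5} for ${\rm arcsinh}_p$ are available. The argument and the prefactors will then fall out of the same Legendre duplication bookkeeping used for \eqref{J3}.

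First I would subtract the two hypergeometric series from Lemma \ref{hypform}: the factor $1-(-1)^n$ annihilates the even-index terms and doubles the odd-index ones, giving
$$\arcsin_p(x)-{\rm arcsinh}_p(x)=2x\sum_{m\geq0}\frac{\left(\frac1p,2m+1\right)^2}{\left(\frac1p+1,2m+1\right)}\,\frac{(x^p)^{2m+1}}{(2m+1)!}\,.$$
The key computational step is then to reduce the odd-index Pochhammer symbols to single-index ones. Writing $\left(\frac1p,2m+1\right)=\frac1p\left(\frac1p+1,2m\right)$, $\left(\frac1p+1,2m+1\right)=\left(\frac1p+1\right)\left(\frac1p+2,2m\right)$ and $(2m+1)!=(2)_{2m}$, and applying the duplication formula $(c)_{2m}=4^m\left(\frac c2,m\right)\left(\frac{c+1}2,m\right)$ to each even-index factor with $s=\frac1{2p}=a$, all the powers of $4$ cancel and one is left with the closed form
$$\arcsin_p(x)-{\rm arcsinh}_p(x)=\frac{8a^2}{1+2a}\,x^{1+\frac1{2a}}\,{}_3F_2\left(a+\tfrac12,\,a+\tfrac12,\,a+1;\,\tfrac32,\,a+\tfrac32;\,x^{1/a}\right),$$
which is the exact analogue of \eqref{J3}.

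Finally I would solve this identity for the ${}_3F_2$ and insert the bounds. For the lower estimate $\widetilde L_a$ I combine the lower bound for $\arcsin_p$ in \eqref{J4} with the upper bound for ${\rm arcsinh}_p$ in \eqref{J5}; after dividing by the prefactor $\tfrac{8a^2}{1+2a}x^{1+1/(2a)}$ (one power of $x$ being absorbed by the leading factor of the two inverse functions), the term $\tfrac{x^p}{p(1+p)}=\tfrac{4a^2}{1+2a}x^{1/(2a)}$ cancels against the prefactor and produces the constant $\tfrac12$. Symmetrically, the upper estimate $\widetilde R_a$ follows from the upper bound for $\arcsin_p$ together with the lower bound for ${\rm arcsinh}_p$, the Gaussian constant $\tfrac{\pi_p}2$ becoming $\tfrac{\pi_{1/(2a)}}2$. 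The only delicate point is the duplication bookkeeping for the odd indices: one must match simultaneously the three upper and two lower ${}_3F_2$ parameters and the exact prefactor $\tfrac{8a^2}{1+2a}x^{1+1/(2a)}$, after which the passage to $\widetilde L_a$ and $\widetilde R_a$ is routine.
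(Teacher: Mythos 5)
Your proposal is correct and follows essentially the same route as the paper: subtract the two hypergeometric representations from Lemma \ref{hypform}, keep only the odd-index terms, reduce the odd-index Pochhammer symbols via the Legendre duplication formula to obtain the identity $\arcsin_p(x)-{\rm arcsinh}_p(x)=\frac{2x^{p+1}}{p(1+p)}\,{}_3F_2\bigl(\frac{1+p}{2p},\frac{1+p}{2p},\frac1{2p}+1;\frac32,\frac1{2p}+\frac32;x^{2p}\bigr)$, and then insert the bilateral bounds \eqref{J4} and \eqref{J5} with $p=\tfrac1{2a}$. Your bookkeeping (peeling off the leading factor and applying the even-index duplication formula) is an equivalent rearrangement of the paper's use of $(2s)_{2n+1}=s\,2^{2n+1}(s+\tfrac12,n)(s+1,n)$, and your pairing of bounds for the lower and upper estimates matches the paper's.
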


\begin{proof}[\bf Proof]
We start with the difference
   \[ \arcsin_p(x) - {\rm arcsinh}_p(x) = x\left[ F\left(\frac{1}{p},\frac{1}{p};
             1+\frac{1}{p};x^p\right) - F\left(\frac{1}{p},\frac{1}{p};
             1+\frac{1}{p};-x^p\right)\right] =: x \cdot H_2\, ,\]
which becomes to the another hypergeometric type function
   \[ H_2 = \sum_{n \geq 0} (1-(-1)^n)\,\frac{(\frac1p,n)\,(\frac1p,n)}{(\frac1p+1,n)}\,
            \frac{(x^p)^n}{n!}
          = 2x^p \sum_{n \geq 0} \frac{(\frac1p,2n+1)\,(\frac1p,2n+1)}{(\frac1p+1,2n+1)}\,
            \frac{(x^p)^{2n}}{(2n+1)!}\, .\]
The formula
   \[ \big(2s\big)_{2n+1} = \frac{\Gamma(2s+1+2n)}{\Gamma(2s+2n)}\,
                            \frac{\Gamma(2s+2n)}{\Gamma(2s)}
                          = s\,2^{2n+1}\,\big(s+\tfrac12,n\big)\,\big(s+1,n\big)\,, \]
is the keystone in rewriting the sum $H_2$ into
   \[ H_2 = \frac{2x^p}{p(1+p)} \sum_{n \geq 0} \frac{(\frac{1+p}{2p},n)\,
            (\frac{1+p}{2p},n)\,(\frac1{2p}+1,n)}{(\frac32,n)\,
            (\frac1{2p}+\frac32,n)} \frac{(x^{2p})^{n}}{(1,n)}\, ,\]
which means
   \begin{equation} \label{J7}
      \arcsin_p(x) - {\rm arcsinh}_p(x) = \frac{2x^{p+1}}{p(1+p)}\,
             {}_3F_2\left(\frac{1+p}{2p},\,\frac{1+p}{2p},\,\frac1{2p}+1;
             \frac32,\,\frac1{2p}+\frac32; x^{2p} \right)\, .
   \end{equation}
Now, again by \eqref{J4} and \eqref{J5} we estimate the $\arcsin_p, {\rm arcsinh}_p$ functions
from above, getting the upper bound
   \[ \arcsin_p(x) - {\rm arcsinh}_p(x)< x\, \left(\frac{\pi_p}2 -
                     \frac{1+\frac1{1+p}\log(1+x^p)}{(1+x^p)^{\frac1p}} \right)\, ,\]
therefore
   \[ {}_3F_2\left(\frac{1+p}{2p},\,\frac{1+p}{2p},\,\frac1{2p}+1;
             \frac32,\,\frac1{2p}+\frac32; x^{2p} \right)
           < \frac{p(p+1)}{2x^p}\left( \frac{\pi_p}2
           - \frac{1+\frac1{1+p}\log(1+x^p)}{(1+x^p)^{\frac1p}}\right)\, .\]
By similar procedure we deduce the lower bound which equals
   \[ \frac{p(1+p)}{2x^p}\,\left(1 + \frac{x^p}{p(1+p)}
           - \frac{1+\frac1p \log(1+x^p)}{(1+x^p)^{\frac1p}}\right)\, .  \]
Substituting in both bounds $p = \tfrac1{2a}$, we achieve the statement of the Theorem.
\end{proof}


\begin{thebibliography}{width}

\bibitem[AS]{as}
\textsc{M. Abramowitz, I. Stegun, eds.}, Handbook of Mathematical Functions with Formulas,
 Graphs and Mathematical Tables,  National Bureau of Standards, Dover, New York, 1965.

\bibitem[AVV]{avvb}
\textsc{G.D. Anderson, M.K. Vamanamurthy, M. Vuorinen}, Conformal Invariants, Inequalities and Quasiconformal
Maps, J. Wiley, 1997.

\bibitem[Ba]{bari}
\textsc{\'A. Baricz}, Gr\"unbaum-type inequality for special functions, J. Inequal. Pure Appl. Math. 7 (2006) Art. 175.

\bibitem[BBV]{bbv}
\textsc{\'{A}. Baricz, B.A. Bhayo, M. Vuorinen}, Tur\'{a}n type inequalities for generalized inverse trigonometric functions,
\url{http://arxiv.org/abs/1209.1696}.

\bibitem[BBK]{bbk}
\textsc{\'{A}. Baricz, B. A. Bhayo, R. Kl\'en}, Convexity properties of generalized trigonometric and hyperbolic functions, Aequat. Math. (in press).

\bibitem[Be]{belevitch}
\textsc{V. Belevitch}, The Gauss hypergeometric ratio as a positive real function, SIAM J. Math. Anal. 13(6) (1982) 1024--1040.

\bibitem[BS]{bsand}
\textsc{B.A. Bhayo, J. S\'andor}, Inequalities connecting generalized trigonometric functions with their inverses, Issues of Analysis 2(20) (2013) 82--90.

\bibitem[BV1]{bvjapprox}
\textsc{B.A. Bhayo, M. Vuorinen}, On generalized trigonometric functions  with two parameters, J. Approx. Theory 164 (2012) 1415--1426.

\bibitem[BV2]{bvarxiv}
\textsc{B.A. Bhayo, M. Vuorinen}, Inequalities for eigenfunctions of the $p$-Laplacian, Issues of Analysis 2 (20) (2013) 13--35.

\bibitem[BE]{be}
\textsc{P.J. Bushell, D.E. Edmunds}, Remarks on generalised trigonometric functions, Rocky Mountain J. Math. 42 (2012) 13--52.

\bibitem[EGL]{egl}
\textsc{D.E. Edmunds, P. Gurka, J. Lang}, Properties of generalized trigonometric functions, J. Approx. Theory 164 (2012) 47--56.


\bibitem[DM]{dm}
\textsc{P. Dr\'abek, R. Man\'asevich}: On the closed solution to some
$p-$Laplacian nonhomogeneous eigenvalue problems, Diff. and Int. Eqns. 12 (1999) 723--740.

\bibitem[EMOT]{em}
\textsc{A. Erd\'elyi, W. Magnus, F. Oberhettinger, F.G. Tricomi}, Higher Transcendental Functions, vol. I, Melbourne, 1981.

\bibitem[JW]{jq}
\textsc{W.-D. Jiang, M.-K. Wang, Y.-M. Chu, Y.-P. Jiang, F. Qi},
Convexity of the generalized sine function and the generalized hyperbolic sine function, J. Approx. Theory 174 (2013) 1--9.

\bibitem[KVZ]{kvz}
\textsc{R. Kl\'{e}n, M. Vuorinen, X.-H. Zhang},
Inequalities for the generalized trigonometric and hyperbolic functions, J. Math. Anal. Appl. 409 (2014) 521--529.

\bibitem[Li]{l}
\textsc{P. Lindqvist}, Some remarkable sine and cosine functions, Ricerche di Matematica Vol. XLIV (1995) 269--290.

\bibitem[LP]{lp}
\textsc{P. Lindqvist, J. Peetre}, $p$-arclength of the $q$-circle, The Mathematics Student 72
(2003) 139--145.

\bibitem[Mi]{mitri}
\textsc{D.S. Mitrinovi\'c}, Analytic Inequalities, Springer-Verlag, Berlin, 1970.

\bibitem[Ne]{ne4}
\textsc{E. Neuman}, Inequalities and bounds for generalized complete elliptic integrals, J. Math. Anal. Appl. 373 (2011) 203--213.

\bibitem[OLBC]{olbc}
\textsc{F.W.J. Olver, D.W. Lozier, R.F. Boisvert, C.W. Clark, eds.}, NIST Handbook of Mathematical Functions, Cambridge University Press, Cambridge, 2010.

\bibitem[SV1]{svu}
\textsc{S. Simi\'c, M. Vuorinen}, On quotients and differences of hypergeometric functions, J. Ineq. Appl. 2011 (2011) Art. 141.

\bibitem[SV2]{svu2}
\textsc{S. Simi\'c, M. Vuorinen}, Landen inequalities for zero-balanced hypergeometric functions, Abstr. Appl. Anal. 2012 (2012) Art. 932061.

\bibitem[Ta]{ti}
\textsc{S. Takeuchi}, Generalized Jacobian elliptic functions and their application to bifurcation
problems associated with $p$-Laplacian, J. Math. Anal. Appl. 385 (2012) 24--35.

\end{thebibliography}
\end{document}